\pdfoutput=1
\documentclass[a4paper,11pt]{amsart}
\usepackage[latin1]{inputenc}
\usepackage{amsmath, amsthm, amssymb, amsfonts, amscd}
\usepackage[english]{babel}
\usepackage[all]{xy}
\usepackage[linktocpage=true]{hyperref}
\usepackage{color}
\usepackage[table]{xcolor}
\usepackage{pinlabel}
\usepackage{tikz-cd}
 \usepackage{booktabs}
\usepackage{enumitem}
\usepackage{subcaption}
\usepackage{float}

\newtheorem{thm}{Theorem}[section]
\newtheorem*{thm*}{Definition}
\newtheorem*{thmm*}{Theorem}

\newtheorem{cor}[thm]{Corollary}
\newtheorem{defi}[thm]{Definition}

\theoremstyle{remark}
\newtheorem{ex}[thm]{Example}
\newtheorem{rmk}[thm]{Remark}
\definecolor{orange}{RGB}{255,127,0}
\newcommand{\red}{\textcolor{red}}
\newcommand{\blue}{\textcolor{blue}}

\newcommand{\grey}{\textcolor{orange}}
\numberwithin{figure}{section}
\numberwithin{equation}{section}
\newcommand\restr[2]{{
  \left.\kern-\nulldelimiterspace 
  #1 
  \vphantom{\big|} 
  \right|_{#2} 
  }}

\title{$f$-distance of knotoids and protein structure}
\author[A. Barbensi, D. Goundaroulis]{Agnese Barbensi, Dimos Goundaroulis}

\address{AB: Mathematical Institute, University of Oxford, Oxford, UK.}

\address{DG: The Center for Genome Architecture, Baylor College of Medicine, Houston, TX 77030, USA and Department of Molecular and Human Genetics, Baylor College of Medicine,
Houston TX 77030, USA}

\begin{document}

\begin{abstract}
Recent studies classify the topology of proteins by analysing the distribution of their projections using knotoids. The approximation of this distribution depends on the number of projection directions that are sampled. Here we investigate the relation between knotoids differing only by small perturbations of the direction of projection. Since such knotoids are connected by at most a single forbidden move, we characterise forbidden moves in terms of equivariant band attachment between strongly invertible knots and of strand passages between $\theta$-curves. This allows for the determination of the optimal sample size needed to produce a well approximated knotoid distribution. Based on that and on topological properties of the distribution, we probe the depth of knotted proteins with the trefoil as the predominant knot type without using subchain analysis.

\vspace{3mm}
\smallskip
\noindent \textbf{Keywords.} Knotoids, Knotoids Distance, Proteins, Unknotting Number, Protein Structure, Protein Topology
\end{abstract}

\maketitle

\section{Introduction}
Knotoids provide a generalisation of knots that deals with the problem of classifying knottiness for open curves \cite{turaev} . They are defined as equivalence classes of diagrams of open arcs, up to isotopies of $S^2$ and Reidemeister moves performed away from the endpoints. Each equivalence class forms a specific knotoid type. Some examples of knotoid diagrams can be seen in Figure \ref{fig:symmetries}.

\begin{figure}[h]
\includegraphics[width=10cm]{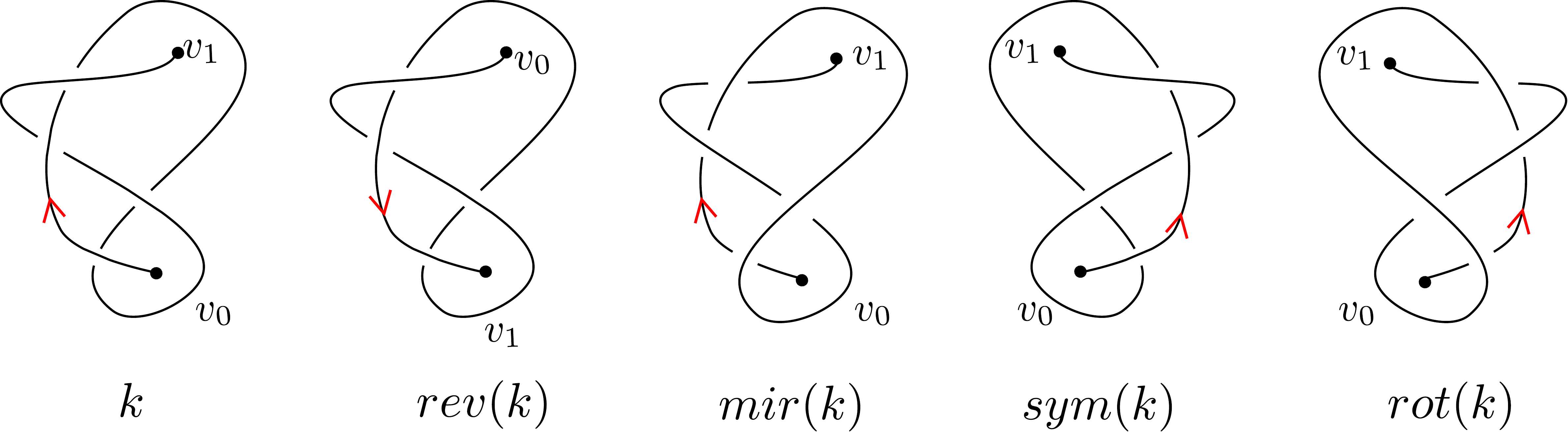}
\caption{From left to right, a knotoid $k$, its reverse $rev(k)$, its mirror reflection $mir(k)$, its symmetric $sym(k)$ and its rotation $rot(k)$. Rotation, reversion, mirror reflection and symmetry are involutive operations on knotoids. These are defined at the beginning of Section \ref{sec:two}.}
\label{fig:symmetries}
\end{figure}

In the past few years, knotoids have been used to classify entanglement in proteins \cite{goundaroulis2017topological, goundaroulis2017, knotoID, knotprot}. Proteins are long chains of amino acids that fold into specific conformations that can sometimes form open ended knots. The fraction of knotted proteins is fairly small \cite{knotprot}, and even if the presence of knots in proteins slows the folding process \cite{mallam2012knot, dabrowski2015prediction, sulkowska2012energy}, it is known that the knotted domains of some families of proteins have been conserved through evolution \cite{sulkowska2012conservation}. While studies seem to suggest that knots provide advantages to some proteins \cite{soler2013effects, soler2014effects, dabrowski2016, san2017knots}, the biological purpose of the presence of knots in proteins is still an open and interesting question in biology. For this reason, understanding the topological features of knotted proteins is an important step in investigating the effect of the presence of knots to structure and function of proteins.  With the knotoids approach, a protein is represented as an open-ended polygonal chain and it is studied by considering all different projections of it. Subsequently, these projections are analysed as knotoids. The topology of the curve is characterised by a distribution of knotoid types, also called the spectrum of the curve, while the predominate type is called the \emph{predominate knotoid}. Since considering all possible projections of a curve is not computationally feasible, we usually sample from the knotoid distribution.

In this work, we study the relation between pairs of knotoids that are obtained from projections that differ from one another only by a small perturbation. Indeed, small perturbations in the choice of the direction of projections can either leave the corresponding knotoid type unchanged (\emph{i.e.} by changing the knotoid diagram by isotopies of $S^2$) or have the effect of performing a sequence of, so-called, \emph{forbidden moves} (see Figure \ref{fig:forbidden}) on the knotoid diagram. Thus, when the spectrum approximates well the knotoid distribution, a pair of different knotoids whose projections are related by a small perturbation will differ by a single forbidden move at most.

\begin{figure}[h]
\includegraphics[width=10cm]{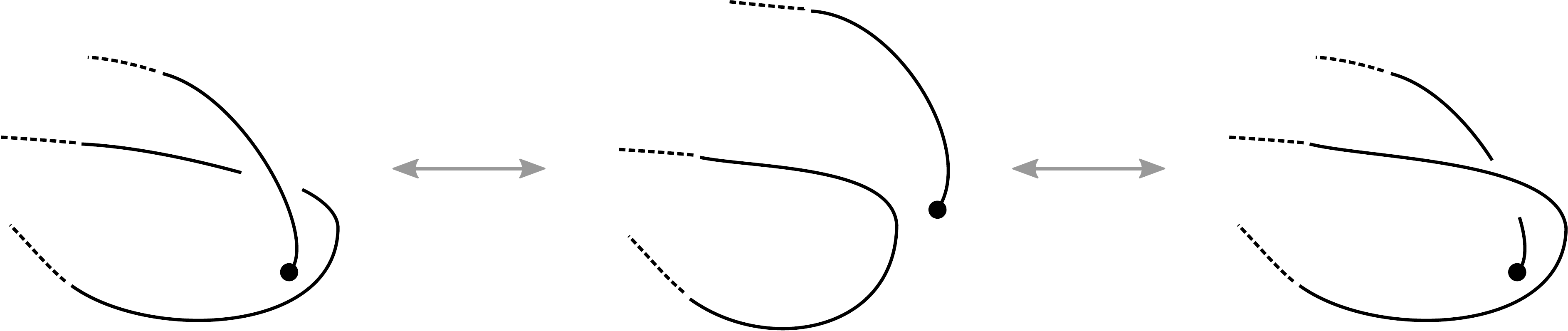}
\caption{Forbidden moves between knotoid diagrams. Performing a forbidden move on a knotoid diagram might result in changing the knotoid type. Moreover, any knotoid diagram can be transformed into the trivial one by a finite sequence of forbidden moves.}
\label{fig:forbidden}
\end{figure}

In analogy to the case of knots and crossing changes, we define a distance on knotoids using forbidden moves. 

\begin{thm*}
Given two knotoids $k_1$ and $k_2$, their \emph{forbidden move-distance} or $f$-distance $d_f(k_1, k_2)$ is the minimal number of forbidden moves, across all representatives of $k_1$ and $k_2$, needed to transform $k_1$ into $k_2$. 
\end{thm*}

Note that since every knotoid diagram can be unknotted by a finite sequence of forbidden moves \cite{turaev,dbc}, the forbidden move-distance is indeed a well defined distance on knotoids. 

We characterise forbidden moves on knotoids in terms of equivariant band attachments on strongly invertible knots, using the correspondence between knotoids and strongly invertible knots proved in \cite{dbc}, and in terms of crossing changes on $\theta$-curves. The main theorem of this paper is the following:

\begin{thmm*}
Consider two equivalence classes of of knotoids $k_1$ and $k_2$ up to rotation and inversion (see Figure \ref{fig:symmetries}). The following are equivalent:
\begin{enumerate}
\item[(i)]  $k_1$ and $k_2$ differ by a single forbidden move; 
 \item[(ii)] their corresponding $\theta$-curves $t_{\approx}(k_1)$ and $t_{\approx}(k_2)$ differ by a strand passage of the edge $e_0$ over either $e_+$ or $e_-$;
 \item[(iii)] their corresponding strongly invertible knots $\gamma_S(k_1)$ and $\gamma_S(k_2)$ differ by an equivariant band surgery.
\end{enumerate}

\end{thmm*}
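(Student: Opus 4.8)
The plan is to treat the $\theta$-curve $t_{\approx}(k)$ as the central object and to establish the two biconditionals (i) $\Leftrightarrow$ (ii) and (ii) $\Leftrightarrow$ (iii) separately, checking both directions of each. Throughout I would use the explicit model from \cite{dbc}: the endpoints of $k$ are placed at the two trivalent vertices, the arc $e_0$ is the knotoid itself, and it is closed up by two unknotted, unlinked arcs $e_+$ and $e_-$ lying respectively in the upper and lower balls bounded by the projection sphere; then $\gamma_S(k)$ is the lift of $e_0$ in the double cover of $S^3$ branched over the unknot $e_+\cup e_-$, with strong inversion given by the covering involution. The quotient by rotation and inversion corresponds, on all three sides, to the symmetry exchanging $e_+\leftrightarrow e_-$ together with the one exchanging the two vertices; this is exactly what makes $t_{\approx}$ and $\gamma_S$ well defined on these classes and what allows (ii) to leave the choice between $e_+$ and $e_-$ unspecified.

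For (i) $\Leftrightarrow$ (ii) I would argue diagrammatically in a neighbourhood of the endpoint at which the forbidden move is performed. Fix a local picture in which $e_0$ enters a vertex $v$ while $e_+$ and $e_-$ leave $v$ into the two complementary balls, and a transversal strand $s$ of $e_0$ runs nearby on the sphere. A forbidden move drags $v$ across $s$, carrying the feet of $e_+$ and $e_-$ with it; the essential point is to show, using that $e_+$ and $e_-$ are trivial arcs, that after an isotopy fixing the rest of the graph the net change is precisely a single strand passage of $e_0$ across one of $e_+$, $e_-$, with the over/under move $\Phi_+$ versus $\Phi_-$ selecting which. The converse uses triviality again: any strand passage of $e_0$ over $e_+$ (or $e_-$) can be isotoped so that the relevant crossing sits next to a vertex, and reopening the $\theta$-curve to a knotoid then realises it as a single forbidden move.

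For (ii) $\Leftrightarrow$ (iii) I would pass to the double branched cover. Since $e_+\cup e_-$ is the branch locus, a strand passage of $e_0$ over $e_+$ or $e_-$ is an interaction of $e_0$ with the branch set; its lift moves a strand of $\gamma_S(k)$ together with its image under the covering involution symmetrically across the fixed axis. Because one cannot push an embedded equivariant arc through the fixed-point set by an isotopy, the correct lift of the passage downstairs is the attachment of an equivariant band, and conversely every equivariant band surgery projects to a single passage of $e_0$ through the branch locus. I would make this precise with the standard local model of the branched cover near the axis, verifying that equivariance of the band is automatic from the symmetry of the configuration and that the count of passages matches the count of bands.

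The main obstacle is the first equivalence, and specifically the direction (i) $\Rightarrow$ (ii): one must control the fact that a forbidden move displaces the whole vertex, and hence drags both added arcs, and show that up to isotopy the result is a single $e_0$-over-$e_\pm$ passage rather than some combination of passages. Getting the bookkeeping of $\Phi_+$ versus $\Phi_-$ right, and matching this choice with the rotation symmetry, is the delicate part; by contrast the branched-cover step is a fairly routine application of the principle that moves through the branch locus lift to equivariant moves.
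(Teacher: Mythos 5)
Your proposal is correct and is essentially the paper's own argument: both rest on the same two local-model computations, namely reading a strand passage of $e_0$ through $e_\pm$ off as a forbidden move after an isotopy fixing the circle $e_-\cup e_+$ (and putting the passage in standard position near a vertex), and identifying an equivariant band, which meets $fix(\tau)$ exactly once, with the lift of such a strand passage under the double branched cover. The only difference is organizational: you prove both directions of (i)$\Leftrightarrow$(ii) and (ii)$\Leftrightarrow$(iii) directly, whereas the paper imports the forbidden-move-to-equivariant-band direction from \cite{dbc} and then proves only (iii)$\Rightarrow$(ii) and (ii)$\Rightarrow$(i) to close the cycle of implications.
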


This result allows us to produce lower bounds on the $f$-distance between knotoids. We then compute the total number of strand passages on all the knotoid diagrams with up to $6$ crossings. This computation gives us upper bounds for the $f$-distance between knotoids with up to $6$ crossings. By comparing lower and upper bounds we compute the $f$-distances $d_f(k_1,k_2)$ for each pair of knotoids $k_1$ and $k_2$ with minimal crossing number $\leq 4$. We then create the \emph{$f$-distance table} for knotoids with minimal crossing number $\leq 4$. 

In the second part of this work we apply the main theorem and the table of $f$-distances  to determine an optimal size for the set of sampled projections that not only approximates well the knotoid distribution but also favours computational speed. 

Based on that, we probe the depth of knotted proteins having the trefoil as predominate knotoid, without analysing the set of all of its subchains. A protein is often considered deeply knotted when at least 20 amino acids can be removed starting from either terminus of the protein structure before converting it to a different type of knot (possibly the trivial one) \cite{taylor2003protein}. The knotted core of a protein is the shortest subchain before the knot type conversion mentioned before and it is believed to have an important biological role in the protein's function \cite{mallam2007, christian}. Recent studies show that formation of deep knots with characteristic structural motifs provides a favorable environment for active sites in enzymes \cite{dabrowski2016}. The depth and, consequently, the length of the knotted core of a protein are currently determined by computationally expensive subchain analysis \cite{knotoID, knotprot}.  

With this work, we provide evidence that the knotoid spectrum of a knotted protein contains important information on the geometry and topology of the protein itself. 

\section*{Notation} Throughout this paper knotoids are indicated according to the tabulation created by the second author in \cite{tableknotoids}. All maps and manifolds are assumed to be smooth, and for maps and sets we will use the notation of \cite{dbc}, namely:
\begin{itemize}
 \item $\mathbb{K}(S^2)$ and $\mathbb{K}(S^2)/_{\approx}$ are the sets of knotoids and the set of knotoids up to rotation and inversion;
 \item $\Theta^s$ is the set of simple labelled $\theta$-curves in $S^3$ and $\Theta^s/_\approx$ is the set of simple labelled $\theta$-curves in $S^3$ up to relabelling the vertices, and up to relabelling the vertices and the edges $e_-$ and $e_+$, respectively;
 \item $\mathcal{K}SI(S^3)$ is the set of strongly invertible knots $(K,\tau)$ in $S^3$.
\end{itemize}

\section{On forbidden moves, crossing changes and band surgeries}\label{sec:two}

Like knots, knotoids admit natural commuting involutive operations. These operations are called \emph{reversion}, \emph{mirror reflection}, \emph{symmetry} and \emph{rotation}, see Figure \ref{fig:symmetries}. Reversion has the effect of changing the orientation of a knotoid, and mirror reflection transforms a knotoid into a knotoid represented by the same diagrams with all the crossings changed. Symmetry reflects a knotoid diagram with respect to the line in $\mathbb{R}^2$ passing through the endpoints. The last involution, the rotation, is defined as the composition of symmetry and mirror reflection.

We will sometimes consider knotoids up to these involutions. Indeed, following the notation of \cite{dbc}, we will denote by $\mathbb{K}(S^2)/_\approx$ the set of knotoids in $S^2$ up to rotation and reversion.

\subsection{Knotoids, $\theta$-curves and strongly invertible knots.}

\begin{defi}
A $\theta$\emph{-curve} is a graph embedded in $S^3$ with $2$ vertices, $v_0$ and $v_1$, and $3$ edges, $e_+, e_-$ and $e_0$, each of which joins $v_0$ to $v_1$, taken up to ambient isotopies preserving the labels of the vertices and the edges. 
\end{defi}

Note that the curves $e_0 \cup e_-$, $e_- \cup e_+$ and $e_0 \cup e_+$ form knots, called the \emph{constituent knots} of the $\theta$-curve. A $\theta$-curve is called \emph{simple} if its constituent knot $e_- \cup e_+$ is the trivial knot. An example of a simple $\theta$-curve is shown in Figure \ref{fig:thetaexample}.

\begin{figure}[h]
\includegraphics[width=3cm]{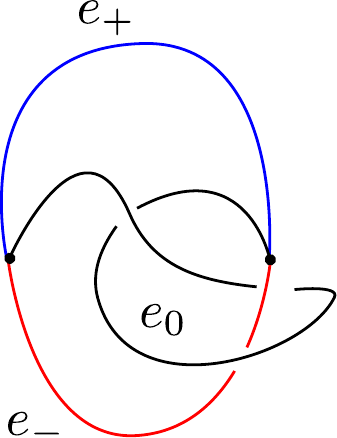}
\caption{A simple $\theta$-curve. The constituent knot $e_- \cup e_+$ (the blue\&red circle) is the trivial knot.}
\label{fig:thetaexample}
\end{figure}

We will find useful to consider $\theta$-curves up to certain particular symmetries. Indeed, following the notation of \cite{dbc}, we will denote by $\Theta^s/_\approx$ the set of simple $\theta$-curves up to relabelling the vertices $v_0$ and $v_1$, and the edges $e_-$ and $e_+$.\\

Recall that the symmetry group of a knot $K$, $Sym^+(S^3,K)$, is the group of diffeomorphisms of the pair $(S^3, K)$ preserving the orientation of $S^3$, where the diffeomorphisms are taken up to isotopies.

\begin{defi}\label{defi:strongly}
A \emph{strongly invertible knot} is a pair $(K, \tau)$, where $K$ is a knot in $S^3$, and $\tau \in Sym^+(S^3,K)$ is called a \emph{strong inversion}, and it is an orientation preserving element of $S^3$ that reverses the orientation of $K$ and such that $\tau^2$ is equal to the identity, taken up to conjugacy in $Sym^+(S^3,K)$. 
\end{defi}

We will denote by $fix(\tau)$ the fixed point set of $\tau$, which is the set of points $x$ suche that $\tau(x) = x$. Note that the positive solution of the Smith Conjecture (see \emph{e.g} \cite{wald}) implies that $fix(\tau)$ in our case is a trivial knot intersecting $K$ in two points. We will denote by $\mathcal{K}SI(S^3)$ the set of strongly invertible knots. As an example, the trefoil knot $3_1$ admits such an inversion in its symmetric group, see Figure \ref{fig:siknot}.

\begin{figure}[h]
\includegraphics[width=3cm]{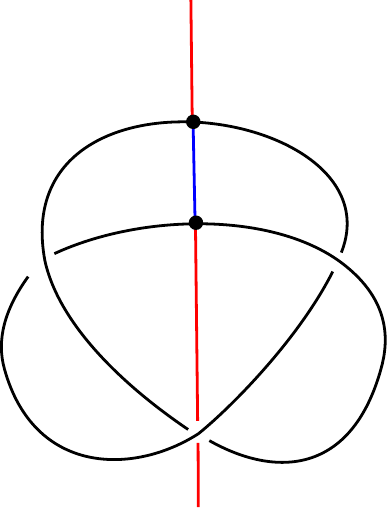}
\caption{The trefoil knot admits a unique strong inversion $\tau$. The fixed point set of $fix(\tau)$ is shown in the picture as a vertical line (the blue\&red line).}
\label{fig:siknot}
\end{figure}

The set $\mathbb{K}(S^2)/_{\approx}$ (\emph{i.e.}~the set of unoriented knotoids in $S^2$, taken up to rotation) is in bijection with the sets $\Theta^s/_\approx$ and $\mathcal{K}SI(S^3)$ (see \cite{turaev} and \cite{dbc} respectively for the proofs): $$t_{\approx}: \mathbb{K}(S^2)/_{\approx} \longrightarrow \Theta^s/_\approx .$$  
$$\gamma_S: \mathbb{K}(S^2)/_{\approx} \longrightarrow \mathcal{K}SI(S^3) .$$

\begin{figure}[h]
\includegraphics[width=7cm]{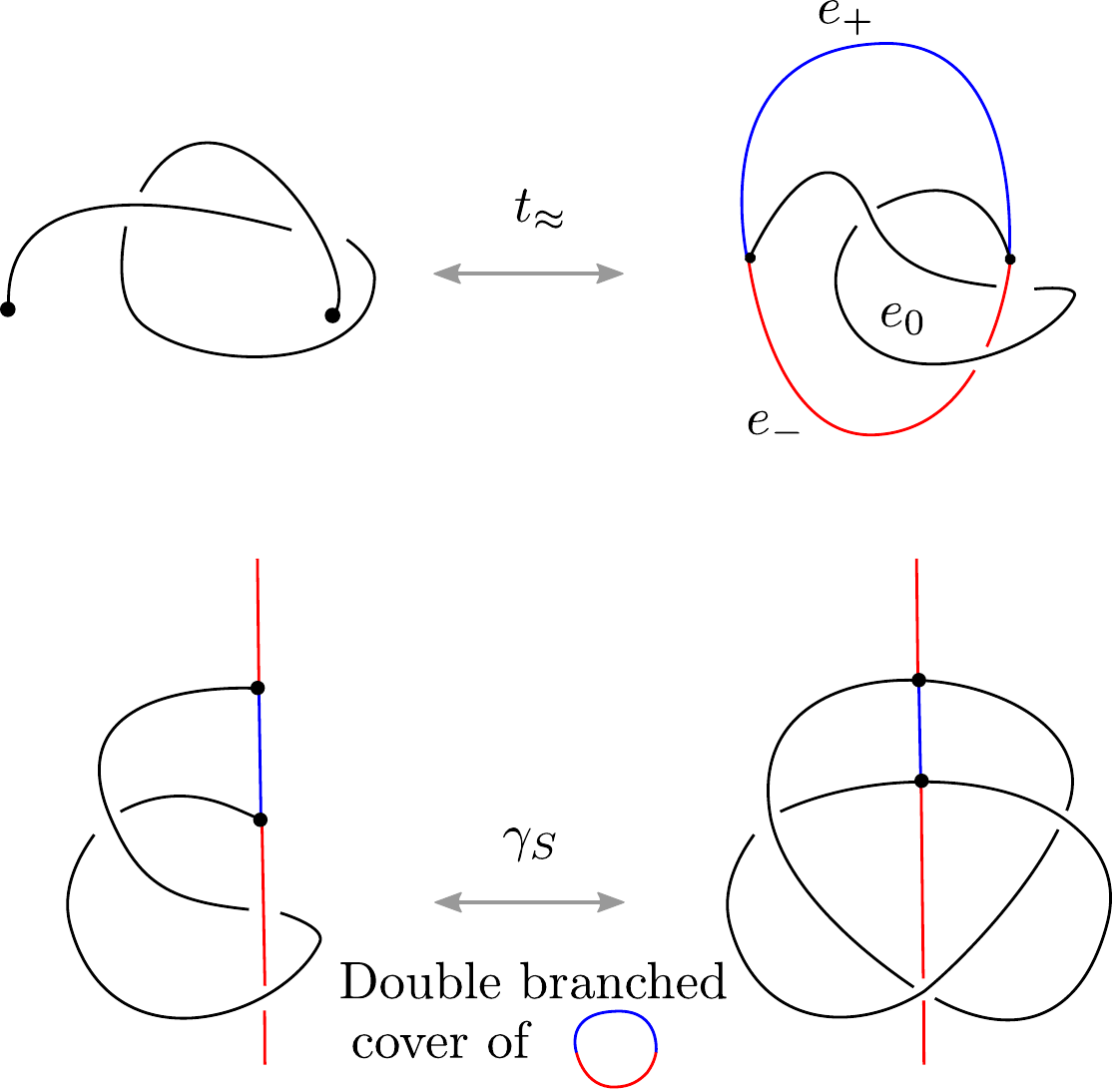}
\caption{On the top, a knotoid and its corresponding simple $\theta$-curve. On the bottom, the associated strongly invertible knot obtained by taking the double cover of $S^3$ branched along $e_- \cup e_+$.}
\label{fig:theta}
\end{figure}

These correspondences work schematically as follows (see also Figure \ref{fig:theta} for an example).\\

\textbf{Step 1.} Given a diagram in $S^2$ representing a knotoid $k$, we construct an embedded arc in $S^2 \times I$ by pushing the overpasses of the diagram into the upper half-space, and the underpasses into the lower one. The endpoints lie in the lines $v_0 \times I$ and $v_1 \times I$. We then obtain a $\theta$-curve by collapsing $S^2 \times \partial I$ to two points. The vertices of the $\theta$-curve are the endpoints of $k$, $e_0 =k$, $e_+$ is the edge containing the image of $S^2 \times \{1\}$ and $e_-$ the one containing the image of $S^2 \times \{-1\}$.\\

\textbf{Step 2.} Given a simple, labelled $\theta$-curve, we obtain a strongly invertible knot by taking the double cover of $S^3$ branched\footnote{For a definition of double branched covers, and for an explanation on how to obtain the double cover of $S^3$ branched along a trivial knot see \emph{e.g.}~\cite{rolfsen}.} along the constituent trivial knot $e_- \cup e_+$, see Figure \ref{fig:theta}.\\

\noindent The converse follows in a similar fashion: \\

\textbf{Step 1.} Given a strongly invertible knot $(K,\tau)$, we label the two halves of the circle $fix(\tau)$ as $e_+$ and $e_-$. The involution $\tau$ induces a projection map $p: S^3 \longrightarrow S^3/\tau \approx S^3$. We then obtain the $\theta$-curve $p(fix(\tau)) \cup p(K)$, where $p(K) = e_0$ and $p(fix(\tau)) = e_- \cup e_+$.\\

\textbf{Step 2.} Given a simple, labelled $\theta$-curve in $S^3$, we can think about it as being embedded in $\mathbb{R}^3$. We can isotope it in such a way that $e_+$ and $e_-$ lie in the upper and lower half-spaces, respectively. We can always do that in such a way that the projection of $e_0$ into $\mathbb{R}^2 \times \{0\}$ is standard. Such projection gives us a knotoid. 

\begin{rmk}
Since two knotoids $k$ and $k_{\textbf{rot}}$ (respectively $rev(k)$) differing by a rotation (respectively inversion) correspond to $\theta$-curves differing by swapping the $e_-$ and $e_+$ labels (respectively $v_0$ and $v_1$), and since the double branched covers of such $\theta$-curves produce equivalent strongly invertible knots, we have the correspondences.
 
\end{rmk}

\subsection{Characterisation of forbidden moves}
In what follows we will give a characterisation of forbidden moves in terms of operations on $\theta$-curves and on strongly invertible knots. 

\subsubsection{Crossing changes on $\theta$-curves}

A forbidden move on $k$ corresponds to performing a strand passage (\emph{i.e.} a crossing change) on the $\theta$-curve $t_\approx (k)$, see Figure \ref{fig:forbiddencrossing}. More precisely, a forbidden move induces a strand passage between the arc $e_0$ and either $e_+$ or $e_-$.

\begin{rmk}\label{rmk:justoneknot}
Call $K^{\pm}_{k}$ the constituent knot of $t_\approx (k)$ given by $e_0 \cup e_{\pm}$. From the previous construction, it follows that
a forbidden move induces a crossing change on exactly one among $K^{+}_{k}$ or $K^{-}_{k}$. In particular, this specific strand passage cannot change simultaneously both these constituent knots of $t_\approx (k)$. 
\end{rmk}

\begin{rmk}\label{rmk:closure}
Note that given a knotoid $k$, the pair $(K^{+}_{k}, K^{-}_{k})$ can be obtained by computing the \emph{overpassing closure} and the mirror image of the \emph{underpassing closure}  of $k$ (for a definition see \cite{turaev}).
 
\end{rmk}

\subsubsection{Band surgeries on strongly invertible knots}
A band surgery is an operation which changes a link into another link.
\begin{defi}
Let $L_1$ be a link and $b: I \times I \longrightarrow S^3$ an embedding such that $L_1 \cap b(I \times I) = b(I \times \partial I)$. The link $L_2 = (L_1 \setminus b(I \times \partial I) ) \cup  b(\partial I \times I)$ is said to be obtained from $L_1$ by a \emph{band surgery} along the band $B=  b(I \times I)$, see Figure \ref{fig:bs}.
\end{defi}

The band surgery is called \emph{coherent} if it respects the orientation of $L_1$ and $L_2$, otherwise it is called \emph{non-coherent}, see Figure \ref{fig:bs}. A non-coherent band surgery it is often called a $H_2$-move (see \emph{e.g.} \cite{band2}). Contrary to the case of coherent band surgeries, $H_2$-moves preserve the number of the components of links. This means that the result of an $H_2$-move performed on a knot will always be a knot.

\begin{figure}[h]
\includegraphics[width=6cm]{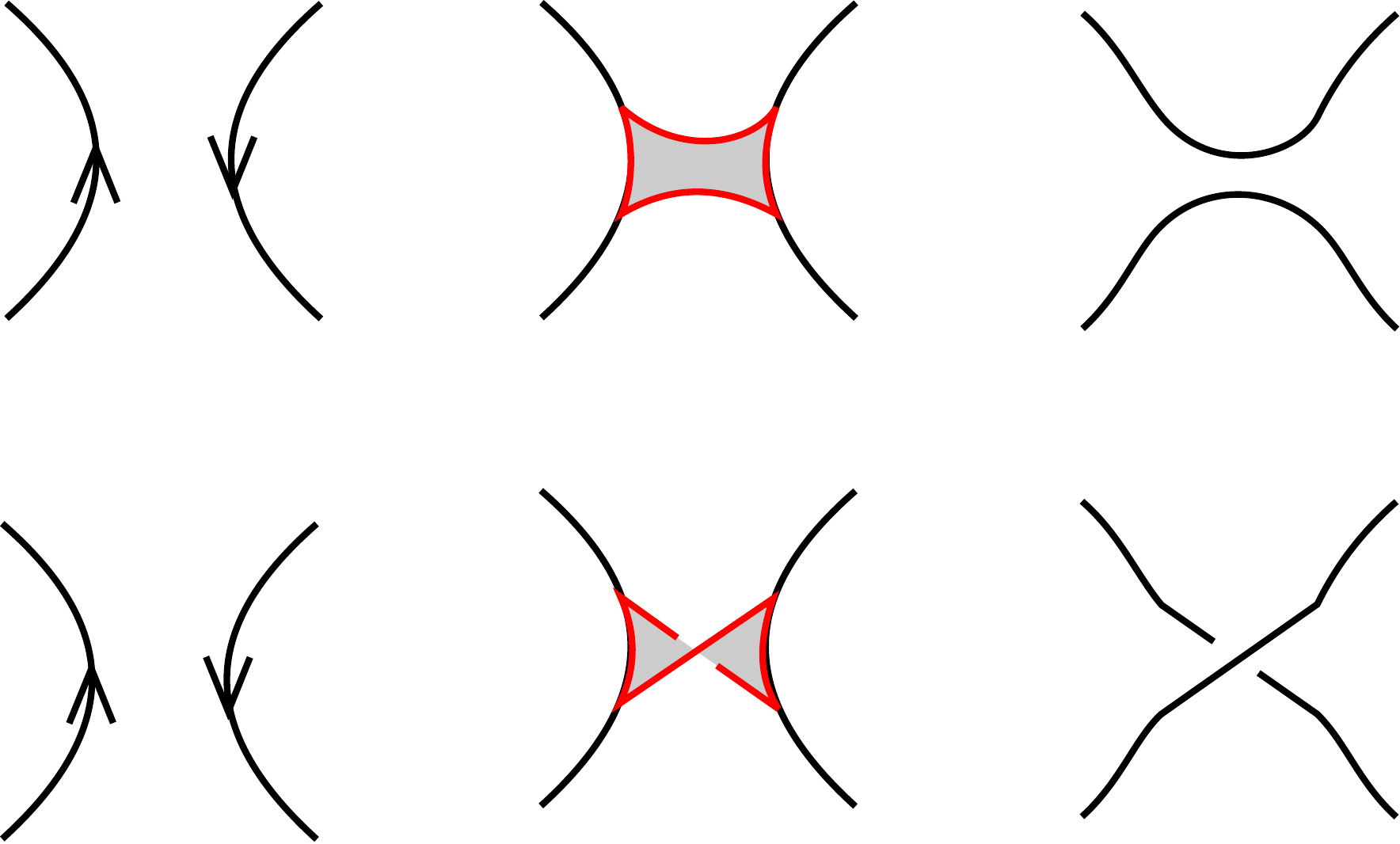}
\caption{Local pictures for a coherent (top) and a non-coherent (bottom) band surgery.}
\label{fig:bs}
\end{figure}

As discussed in \cite{dbc}, two knotoids that differ by a forbidden move have lifts that are related by a single $H_2$-move. Moreover, the band attachment is \emph{equivariant} (as in Definition \ref{equivariantBA} below) with respect to the involutions of the two knots (see Figure \ref{fig:forbiddencrossing}). 

\begin{defi}\label{equivariantBA}
Consider a strongly invertible knot $(K_1, \tau_1)$. We say that the strongly invertible knot $(K_2, \tau_2)$ is obtained from $(K_1, \tau_1)$ by an \emph{equivariant band surgery} if the knots $K_1$ and $K_2$ are related by an $H_2$-move, such that:

\begin{itemize}
 
 \item $fix(\tau_1)$ intersects the band $b(I \times I)$ transversally exactly once in its interior and the band is invariant under $\tau_1$;
 
 \item $(K_2, \tau_2)$ and $(K'_1, \tau_1)$ are equivalent as strongly invertible knots\footnote{Recall from Definition \ref{defi:strongly} that two strongly invertible knots $(K,\tau)$ and $(K',\tau')$ are \emph{equivalent} if $K$ and $K'$ are equivalent as knots in $S^3$, and $\tau$ and $\tau'$ are conjugated in $Sym^+(S^3,K)$. }, where $K'_1$ is the knot obtained from $K_1$ by doing the band surgery.
\end{itemize}

\end{defi}

In other words, an equivariant band surgery between two strongly invertible knots is a band surgery respecting both the involutions of the knots. \\

\begin{figure}[h]
\includegraphics[width=10cm]{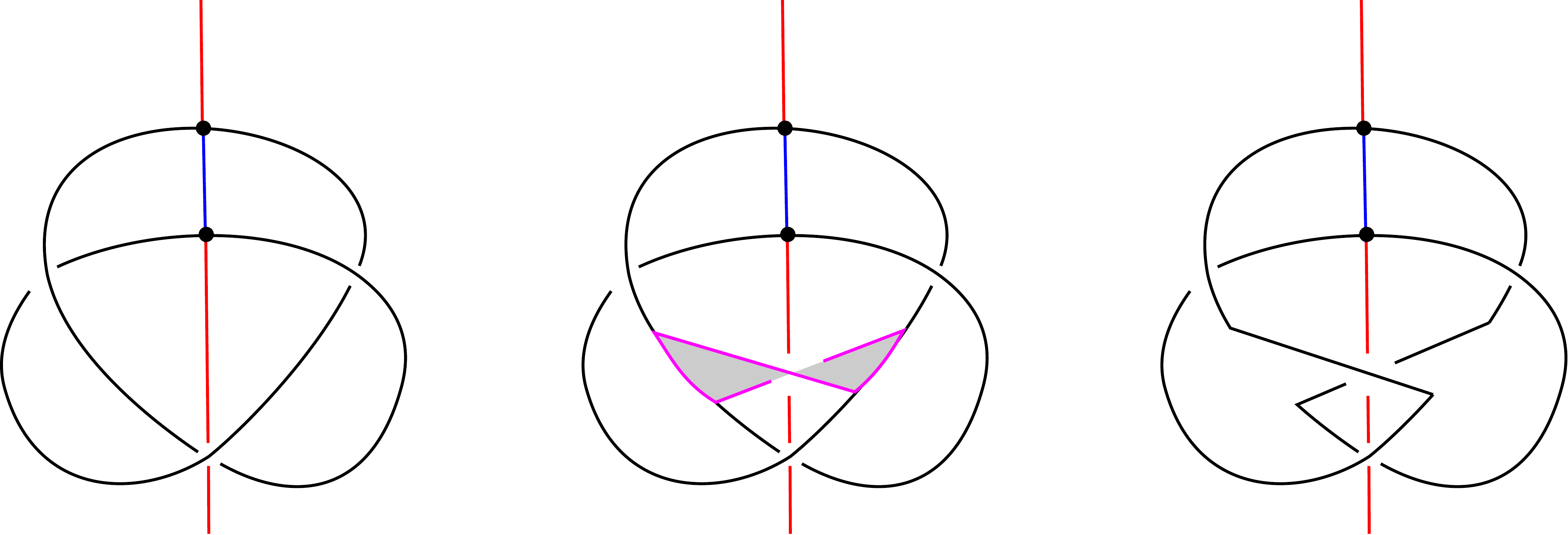}
\caption{The strongly invertible knots $(3_1, \tau)$ and $(0_1, \tau')$ are related by an equivariant band surgery.}
\label{fig:equivariantBS}
\end{figure}

An example of an equivariant band surgery is shown in Figure \ref{fig:equivariantBS}. \\

\begin{figure}[h]
\includegraphics[width=11cm]{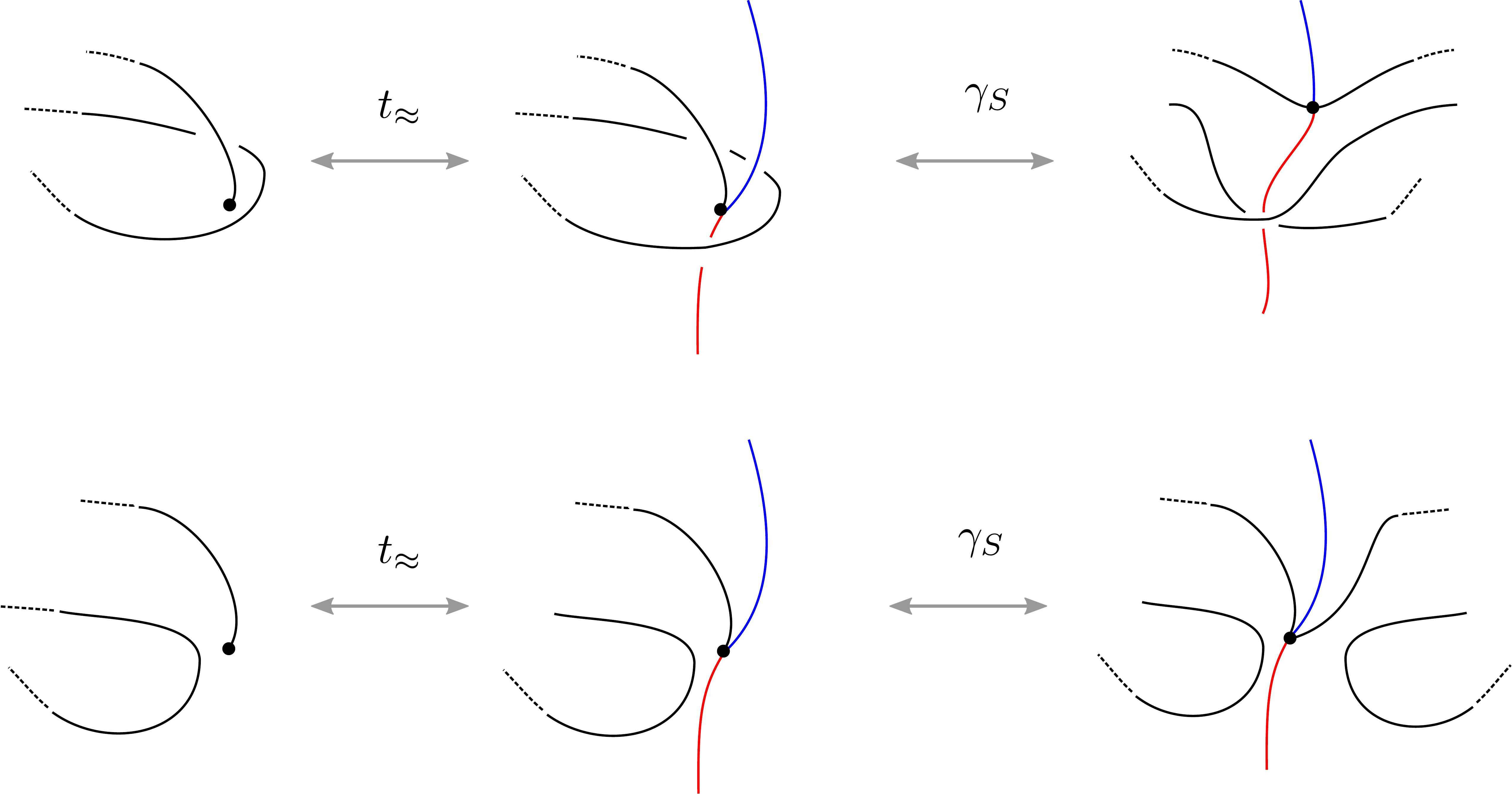}
\caption{A forbidden move between two knotoids $k_1$ and $k_2$ induces a strand passage between the arcs $e_0$ and $e_{\pm}$ between the corresponding $\theta$-curves, and an equivariant band attachment between the corresponding strongly invertible knots.}
\label{fig:forbiddencrossing}
\end{figure}

We are now able to prove our main result.

\begin{thm}\label{characterisation}
Consider two equivalence classes of knotoids $k_1$ and $k_2$ up to rotation and inversion. The following are equivalent:
\begin{itemize}
\item  $k_1$ and $k_2$ differ by a single forbidden move; 
 \item their corresponding $\theta$-curves $t_{\approx}(k_1)$ and $t_{\approx}(k_2)$ differ by a strand passage of the edge $e_0$ over either $e_+$ or $e_-$;
 \item their corresponding strongly invertible knots $\gamma_S(k_1)$ and $\gamma_S(k_2)$ differ by an equivariant band surgery.
\end{itemize}

\end{thm}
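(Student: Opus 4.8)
The plan is to prove the two equivalences (i) $\Leftrightarrow$ (ii) and (ii) $\Leftrightarrow$ (iii) separately, in each case tracking a single local move through the relevant correspondence. Since $t_\approx$ and $\gamma_S$ are bijections (and $\gamma_S$ is obtained from $t_\approx$ by the double branched cover of Step 2), it suffices to show that the forbidden move, the strand passage of $e_0$ over $e_\pm$, and the equivariant band surgery are carried to one another by these constructions.

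For (i) $\Leftrightarrow$ (ii), I would argue diagrammatically using the construction of $t_\approx$ (Step 1). Starting from a knotoid diagram, the endpoints become the vertices $v_0, v_1$, the arc $e_0$ is the knotoid itself, and $e_+, e_-$ collect the overpasses and underpasses pushed into the upper and lower half-spaces. A forbidden move slides an endpoint across an adjacent strand; in the lifted picture this is exactly an isotopy forcing $e_0$ to pass through $e_+$ (if the endpoint crosses over) or through $e_-$ (if it crosses under), i.e.\ a strand passage of $e_0$ over $e_+$ or $e_-$. This gives (i) $\Rightarrow$ (ii), and conversely any such strand passage can be pushed into the neighbourhood of a vertex and read back as a forbidden move on the diagram. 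Here Remark \ref{rmk:justoneknot} is the decisive sanity check: the passage alters exactly one of $K^+_k$, $K^-_k$, ruling out a passage of $e_+$ over $e_-$, which matches the fact that a single forbidden move is of the over- or the under-type but not both.

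For (ii) $\Leftrightarrow$ (iii), I would pass to the double branched cover of $S^3$ along the trivial constituent knot $e_- \cup e_+$ (Step 2), under which $e_0$ lifts to the strongly invertible knot $K = \gamma_S(k)$ and the branch locus $e_- \cup e_+$ lifts to $fix(\tau)$. A strand passage of $e_0$ over $e_\pm$ is a crossing change localised where $e_0$ runs alongside a strand of the branch set. Lifting the region supporting this local move, the two sheets of the cover meet along $fix(\tau)$ and the track of the passage lifts to a single $\tau$-invariant band whose core crosses $fix(\tau)$ transversally once. The surgery this band performs is non-coherent and keeps $K$ connected, hence is an $H_2$-move, so the outcome is precisely the equivariant band surgery of Definition \ref{equivariantBA}. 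Conversely, an equivariant band surgery, whose band is $\tau$-invariant and meets $fix(\tau)$ once, descends under the quotient projection $p$ to a crossing change between $e_0 = p(K)$ and the branch edge, and the single transverse intersection ensures the passage involves just one of $e_+, e_-$.

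The step I expect to require the most care is the converse direction of (ii) $\Leftrightarrow$ (iii): showing that every \emph{equivariant} band surgery downstairs is the lift of an $e_0$-over-$e_\pm$ strand passage and not of some more general move. The equivariance hypotheses, invariance of the band under $\tau$ together with a single transverse intersection with $fix(\tau)$, are exactly what is needed to quotient the band to an embedded disk meeting the branch locus once, so that its projection is a crossing change between $e_0$ and the branch set rather than a genuine band attachment; making this descent rigorous, and checking that equivalence up to rotation and inversion matches the relabelling symmetries built into $\Theta^s/_\approx$ and $\mathcal{K}SI(S^3)$, is the crux. The remaining bookkeeping (orientations, the coherent-versus-non-coherent distinction, and well-definedness on equivalence classes) is routine once these descent statements are in place.
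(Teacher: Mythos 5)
Your proposal is correct and follows essentially the same route as the paper: both arguments track the single local move through the two correspondences, normalizing the strand passage near the unknotted circle $e_-\cup e_+$ so that it projects to a forbidden move on the knotoid, and normalizing the $\tau$-invariant band meeting $fix(\tau)$ transversally once so that it descends under the quotient to a crossing change of $e_0$ with one of $e_\pm$. The only difference is organizational: you prove the two biconditionals (i)$\Leftrightarrow$(ii) and (ii)$\Leftrightarrow$(iii), whereas the paper takes the forward implications (forbidden move $\Rightarrow$ strand passage, and forbidden move $\Rightarrow$ equivariant band surgery, the latter quoted from the double-branched-cover paper) as already established and closes the cycle by proving exactly the two descent statements you identify as the crux, namely (iii)$\Rightarrow$(ii) and (ii)$\Rightarrow$(i).
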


\begin{proof}
Thanks to the discussion of the previous subsections, it is enough to show the following.
\begin{itemize}
 \item Given two strongly invertible knots related by an equivariant band attachment, their corresponding $\theta$-curves are related by a strand passage of $e_0$ through either $e_+$ or $e_-$;
 
 \item given two $\theta$-curves related by a strand passage of $e_0$ through either $e_+$ or $e_-$, their corresponding knotoids differ by a forbidden move.
\end{itemize}

Consider then an equivariant band surgery between two strongly invertible knots $(K_1,\tau_1)$ and $(K_2, \tau_2)$. Up to ambient isotopies fixing the circle $fix(\tau_1)$ the band attachment locally looks like the one in the top part Figure \ref{fig:localpicture1}, with possibly the opposite twists on the band. On the quotient $S^3/\tau_1 \approx S^3$ this results in a strand passage between the arcs $e_0$ and one between $e_+$ or $e_-$ in the $\theta$-curve $p(fix(\tau)) \cup p(K)$, as shown in the bottom of Figure \ref{fig:localpicture1}.

\begin{figure}[h]
\includegraphics[width=11cm]{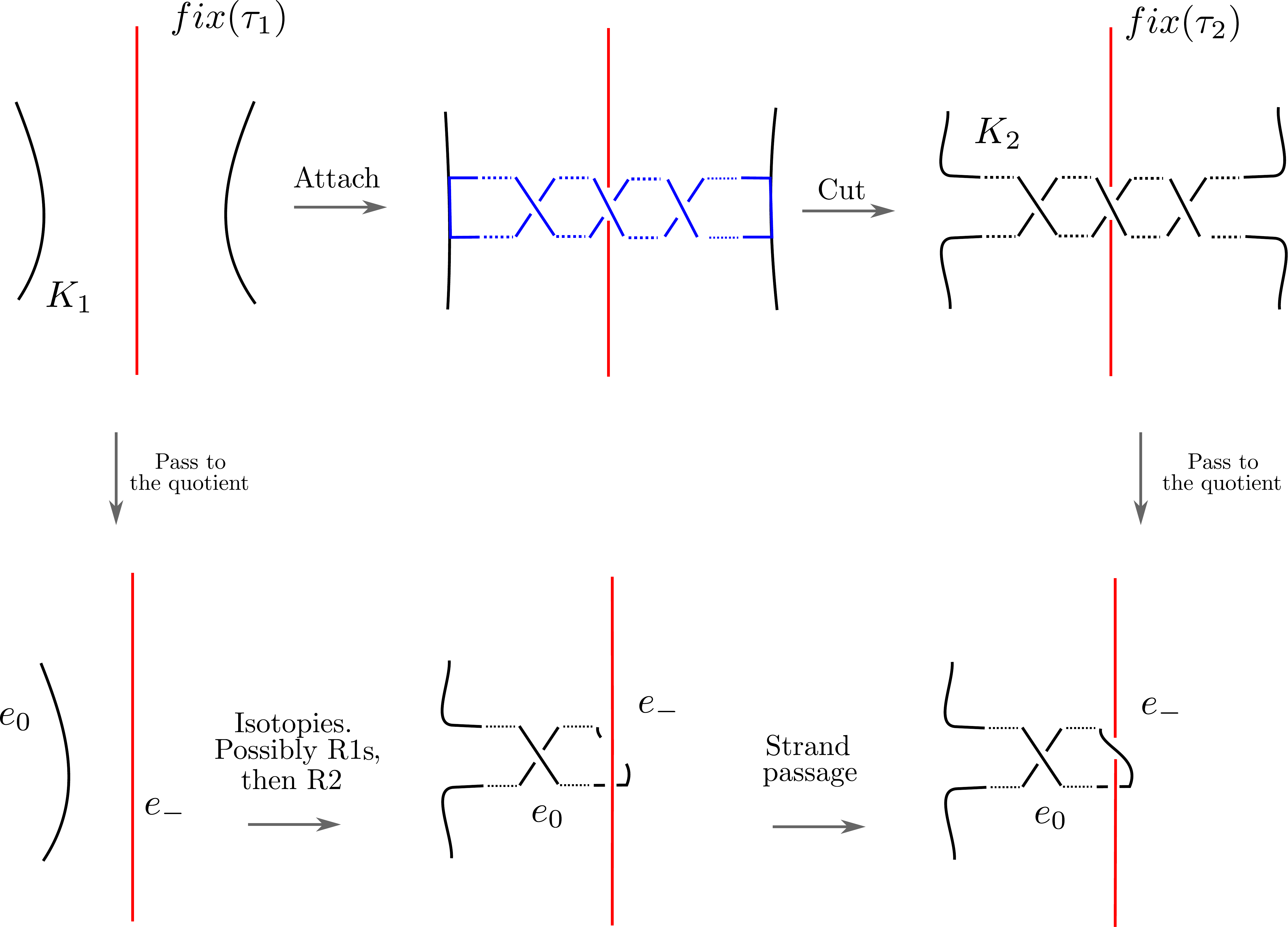}
\caption{On the top row, we present a zoomed-in instance of two strongly invertible knots $(K_1,\tau_1)$ on the left and $(K_2, \tau_2)$ on the right that are related by an equivariant band surgery. The top middle row shows the band attached between opposite arcs of the knot, up to ambient isotopies  fixing the circle $fix(\tau_1)$ (and up to inverting the crossings). The band has an odd number of twists. On the bottom row, the corresponding effect on the associated $\theta$-curves. On the bottom left and right we see the $\theta$-curves obtained by quotienting the knots $(K_1,\tau_1)$ and $(K_2, \tau_2)$ along $\tau_1$ and $\tau_2$. The effect of the band attachment on the $\theta$-curves corresponds (up to ambient isotopies) to a crossing change. More precisely, as shown in the picture, if the attached band has $2n+1$ twists, the $\theta$-curves are related by a sequence of $n$ Reidemeister moves of type I followed by a Reidemeister move of type II and by a single strand passage.}
\label{fig:localpicture1}
\end{figure} 

Analogously, consider a simple $\theta$-curve. Up to label preserving ambient isotopies fixing the circle $e_- \cup e_+$, any strand passage between the arc $e_0$ and the arc $e_{\pm}$ locally looks like the one shown in the top part of Figure \ref{fig:localpicture2} (up to changing the crossing between $e_0$ and $e_{\pm}$). The bottom right part of Figure \ref{fig:localpicture2} shows how this translates into a forbidden move on the corresponding knotoid. The case where the crossing between $e_0$ and $e_{\pm}$ is the opposite one is similar.  

\begin{figure}[h]
\includegraphics[width=10cm]{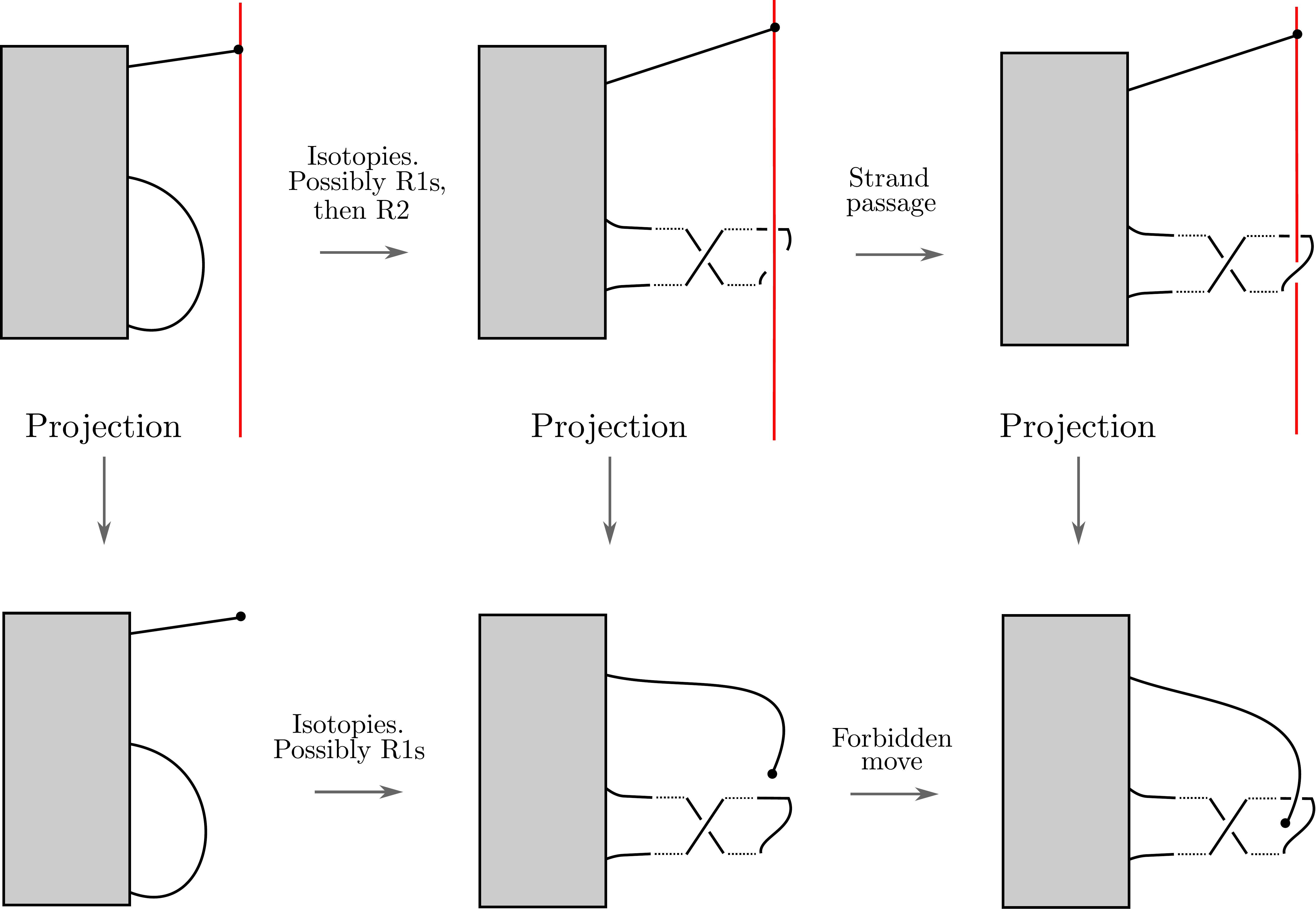}
\caption{On the top row, two $\theta$-curves related by a strand passage between the arc $e_0$ and the arc $e_{\pm}$. Up to label preserving ambient isotopies fixing the circle $e_- \cup e_+$ we can make the strand passage look like in the picture. The effect on the corresponding projections giving the knotoids is to perform a sequence of Reidemeister moves of type I followed by a single forbidden move.} 
\label{fig:localpicture2}
\end{figure}
 
\end{proof}

\subsection{Lower bounds on the $f$-distance}\label{sec:lowerbounds}
We use Theorem \ref{characterisation} to produce lower bounds for the forbidden move-distance between equivalence classes of knotoids up to the four involutions of Figure \ref{fig:symmetries}. With a little abuse of notation, we will still call ``knotoids'' these equivalence classes.\\

The \emph{$H_2$-Gordian distance} $d_{H_2}(K, K')$ between two knots $K$ and $K'$ is defined as the minimal number of equivariant band attachments connecting $K$ and $K'$ (see \cite{kanenobu2010band}). As an immediate consequence of Theorem \ref{characterisation}, given two knotoids $k_1$ and $k_2$, with corresponding strongly invertible knots $\gamma_S(k_1) = (K_1, \tau_1)$ and $\gamma_S(k_2) = (K_2, \tau_2)$, we have that
\begin{equation}\label{bound1}
d_f(k_1, k_2) \geq d_{H_2}(K_1, K_2). 
\end{equation}

Analogously, given two knotoids $k_1$ and $k_2$, consider the pairs $(K^{+}_{k_1}, K^{-}_{k_1})$ and $(K^{+}_{k_2}, K^{-}_{k_2})$. We can define their \emph{Gordian distance} $d_{\text{pair}}((K^{+}_{k_1}, K^{-}_{k_1}), (K^{+}_{k_2}, K^{-}_{k_2}))$ as the minimum between $d( K^{+}_{k_1}, (K^{+}_{k_2} ) + d( K^{-}_{k_1}, (K^{-}_{k_2} ) $ and $d( K^{+}_{k_1}, K^{-}_{k_2} ) + d( K^{-}_{k_1}, K^{+}_{k_2} )$, where $d$ is the usual Gordian distance between knots, that is, the minimal number of crossing changes needed to transform a knot into another. From Remark \ref{rmk:justoneknot} it follows that

\begin{equation}\label{bound2}
 d_f(k_1, k_2) \geq d_{\text{pair}}((K^{+}_{k_1}, K^{-}_{k_1}), (K^{+}_{k_2}, K^{-}_{k_2})).  
\end{equation}

Let $u(k)$ denote the unknotting number of $k$ as a knot in $S^3$, that is, the minimal number of crossing changes needed to unknot $k$ \cite{rolfsen}.
We have as an immediate corollary of Theorem \ref{characterisation} we have the following.

\begin{cor}\label{unknotting}
Let $k_u$ denote the trivial knotoid. If $k$ is a non trivial knot type knotoid then $d_f(k,k_u) \geq 2 u(k)$.
\end{cor}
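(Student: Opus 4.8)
The plan is to derive the bound directly from the pairwise Gordian distance inequality \eqref{bound2}, which is itself a consequence of Remark \ref{rmk:justoneknot}. First I would identify the constituent knots of both knotoids. Since $k$ is a knot-type knotoid, its two endpoints lie in a common region of the diagram, so the connecting arc used in both the overpassing and the underpassing closure can be chosen to cross no strands; hence both closures yield the same knot type, namely the underlying knot $K$, which satisfies $u(K) = u(k)$. By Remark \ref{rmk:closure} this gives $K^{+}_{k} = K$ and $K^{-}_{k} = \bar K$, where $\bar K$ denotes the mirror image. For the trivial knotoid one has $K^{+}_{k_u} = K^{-}_{k_u} = 0_1$.

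Next I would substitute these into \eqref{bound2}. Because the target pair is $(0_1, 0_1)$, the two quantities in the definition of $d_{\text{pair}}$ coincide, so
\[
d_f(k, k_u) \;\geq\; d_{\text{pair}}\big((K,\bar K),(0_1,0_1)\big) \;=\; d(K, 0_1) + d(\bar K, 0_1).
\]
Recalling that the Gordian distance to the unknot is exactly the unknotting number, we have $d(K,0_1) = u(K)$ and $d(\bar K, 0_1) = u(\bar K)$.

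Finally I would invoke the elementary fact that mirroring carries crossing changes to crossing changes, whence $u(\bar K) = u(K)$. This yields $d(K,0_1) + d(\bar K, 0_1) = 2u(K) = 2u(k)$ and closes the argument. An equivalent route, avoiding the explicit $d_{\text{pair}}$ formula, argues straight from Remark \ref{rmk:justoneknot}: each forbidden move alters exactly one of $K^{+}_{k}$ or $K^{-}_{k}$ by a single crossing change, so unknotting $K^{+}_{k}=K$ and $K^{-}_{k}=\bar K$ separately forces at least $u(K) + u(\bar K) = 2u(k)$ moves in total; I would keep whichever phrasing reads more cleanly.

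The only genuinely non-routine point is the first step, namely the claim that for a knot-type knotoid both closures reproduce the same underlying knot $K$ (so that the constituent pair is $(K,\bar K)$); once this is in place, everything reduces to the standard identities $d(\cdot,0_1)=u$ and $u(\bar K)=u(K)$, together with the already established inequality \eqref{bound2}.
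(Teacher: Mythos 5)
Your proof is correct and follows essentially the same route as the paper's: both arguments identify the constituent knots $K^{\pm}_k$ of a knot-type knotoid with its underlying knot and then apply the pairwise Gordian bound \eqref{bound2} (equivalently, Remark \ref{rmk:justoneknot}) to get $u(K^+_k)+u(K^-_k)=2u(k)$ as a lower bound. The only difference is that you track the mirror image in $K^-_k$ explicitly (where the paper simply asserts both constituent knots are isotopic to $k$), a refinement that is harmless since $u(\bar K)=u(K)$.
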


\begin{proof}
If $k$ is a knot type knotoid, its corresponding constituent knots $K^{\pm}_k$ are both isotopic to $k$. Thus, $d_f(k,u)\geq 2 u(K^{\pm}_k)$ where $u(K^{\pm}_k)$ is the unknotting number of the constituent knot.

\end{proof}

Note that we cannot prove the equality, since in $K^{\pm}_k$ the unknotting crossing change might involve only the arc $e_{\pm}$, and thus it would not correspond to a forbidden move. \\

\begin{rmk}\label{rmk:unknotting}
As for knots and crossing changes, the $f$-distance $d_f(k,0_1)$ between a knotoid $k$ and the trivial knotoid $0_1$ provides a measure of complexity for $k$. We will call this quantity the \emph{unravelling number} of $K$, denoted by $u_f(K)$.

\end{rmk}

\section{Computing $f$-distances of $S^2$-knotoids}\label{sec:computations}

As mentioned above, the main theorem provides lower bounds for $f$-distances between isotopy classes of knotoids. Since our aim is to build a table of $f$-distances, this information alone is not sufficient. For this reason, we computed experimentally the $f$-distance between all non-composite knotoid diagrams, including non-minimal crossing representations, with up to six crossings with the help of a computer program written in \texttt{python 3.7}. 

In brief our method is as follows. First, all  2363766 knotoid diagrams with up to six crossings (both of minimal and non-minimal crossing number representation) \cite{tableknotoids} are encoded using  oriented Gauss codes. We note here that from now on, we shall be using the terms knotoid diagram and oriented Gauss code interchangeably. Each knotoid diagram is then identified using the arrow polynomial for knotoids \cite{gugumcu2017}  and the classification of $S^2$-knotoids found in  \cite{tableknotoids}. Let now $\mathcal{K}$ be the set of all knotoid diagrams with up to six crossings and let $G(V,E)$ be an undirected graph such that:

\begin{align*}
 & V(G) = \mathcal{K} \\
 & E(G) = \{ (v,u) \ \vert \ (v,u) \in \mathcal{K}^2, \ v\stackrel{f}{\sim} u, \ v\neq u\},
 \end{align*} 
where $ v\stackrel{f}{\sim} u$ denotes a pair of knotoid diagrams $(v, u)$ that are related by a single forbidden move. In other words, $G$ is the undirected graph whose vertices are knotoid diagrams and  if two diagrams are related with a single forbidden move, then the corresponding vertices of $G$ are connected with an edge.  Our program builds $G$ and then searches for all Dijkstra paths between all possible pairs of vertices. Finally, the set of all diagrams is partitioned into isotopy classes and the Dijkstra path of minimal distance between two isotopy classes determines their numerical $f$-distance, $d_f^{\rm num}$. From this we can obtain upper bounds for the $f$-distances between isotopy classes of knotoids by computing their experimental $f$-distances which are defined as:

$$d_f^{\rm exp}(v,u) = \min \left \{  d_f^{\rm num}(v,x) \  \vert \ x \in ( u ,u^m,u^s, u^{ms}) \right \}. $$

By comparing the upper bounds with the lower bounds discussed in Section \ref{sec:lowerbounds} we are able to produce Table \textbf{S1} (shown in the Supplementary Information) containing the $f$-distances between equivalence classes of knotoids with minimal crossing number $\leq 4$. Most of the lower bounds in Table \textbf{S1} are obtained using the inequality \ref{bound2}.  Gordian distances between knot types are taken from \cite{moon}, while $H_2$-distances from \cite{kanenobu2010band} and \cite{kanenobu2016band}.

Note that this can possibly be improved by considering in the experimental approach a higher threshold  for the maximum crossing number. This means that non-minimal representations of higher crossing number for the ambiguous entries in Table \textbf{S1} will be considered, which may help decreasing their upper bounds. Unfortunately, our available computational power prohibited us from exploring this possibility.

\begin{ex}
Computing lower bounds using the inequality \ref{bound2} it is quite straightforward. Indeed, given a knotoid $k$ we obtain the constituent knots $K^{\pm}_k$ as explained in Remark \ref{rmk:closure}. Then, using values for the Gordian distance taken from \cite{moon} we compute $d_{\text{pair}}$ for each pair of knotoids.

To illustrate how our method works in the case of inequality \ref{bound1}, we will prove as an example that $d_f(3_1,4_7) = 2$. 
Given a knot $K$, it is well known (see \emph{e.g.}~\cite{rolfsen}) that the double cover of $S^3$ branched along $K$ is a closed $3$-manifold $\Sigma(K)$ whose homeomorphism class depends solely on the knot $K$. Let's denote by $\delta(K)$ the dimension of the first homology of $\Sigma(K)$ with coefficients in $\mathbb{Z}_3$, $\delta(K) = \text{dim}(H_1(\Sigma(K), \mathbb{Z}_3))$. The value of the Jones polynomial of $K$ at $ t^{1/2} = e^{i \pi /6}$ can be computed as $V(K, \omega) = \pm(i \sqrt{3})^{\delta(K)}$ \cite[Proposition $5.1$]{kanenobu2010band}.  If two knots $K$ and $K'$ have $H_2$-distance $1$ then the ratio $ V(K, \omega)/V(K', \omega) \in\{\pm 1, \pm i \sqrt{3}^{\pm 1} \} $\cite[Lemma $5.2$]{kanenobu2010band}. 

We shall apply these to the pair $(3_1,4_7)$. Following \cite{dbc} it is straightforward to see that knotoid $3_1$ lifts to a connected sum of trefoil knots $3_1 \sharp 3_1$. Additionally, it is known (see \emph{e.g.}~\cite{kanenobu2010band}) that $\delta(3_1 \sharp 3_1) = 2$ and, thus, $V(3_1 \sharp 3_1, \omega)  = \pm 3$. On the other hand, $4_7$ lifts to the torus knot $8_{19}$, and it is known (see \emph{e.g.}~\cite{knotorious}) that $H_1(\Sigma(8_{19}), \mathbb{Z}) \cong \mathbb{Z}_3$. Thus, $\delta(8_{19}) = 1$. The ratio $ V(3_1 \sharp 3_1, \omega)/V(8_{19}, \omega) \notin\{\pm 1, \pm i \sqrt{3}^{\pm 1} \} $ and so $3_1$ and $4_7$ cannot have $H_2$-distance equal to 1. Finally, from Table \textbf{S4} (shown in the Supplementary Information) we deduce that $d_f^{\rm exp} (3_1, 4_7) = 2$ and therefore we have that $d_f(3_1,4_7) = 2$. In a similar way we compute lower bounds for the $f$-distance of corresponding to the entries in red of Tables \textbf{S1} and \textbf{S2} (shown in the Supplementary Information), since in these cases the knotoids lift to knots $K$ with $\delta{K} = 1$.

\end{ex}

\section{Application to protein studies}\label{sec:application}

Proteins are long linear biomolecules that often fold into conformations with non-trivial topology. By tracing the coordinates of their ${\rm C}\alpha$ atoms, one can model them as open polygonal curves in 3-space. Until recently, in order to analyse proteins in terms of their knottedness one had first to artificially close  the curve since under classical knot theory all open curves are topologically trivial. In \cite{goundaroulis2017} the second author and collaborators proposed an alternative approach using the concept of knotoids. 

\begin{figure}[!tbh]
\centering
\includegraphics[width=11cm]{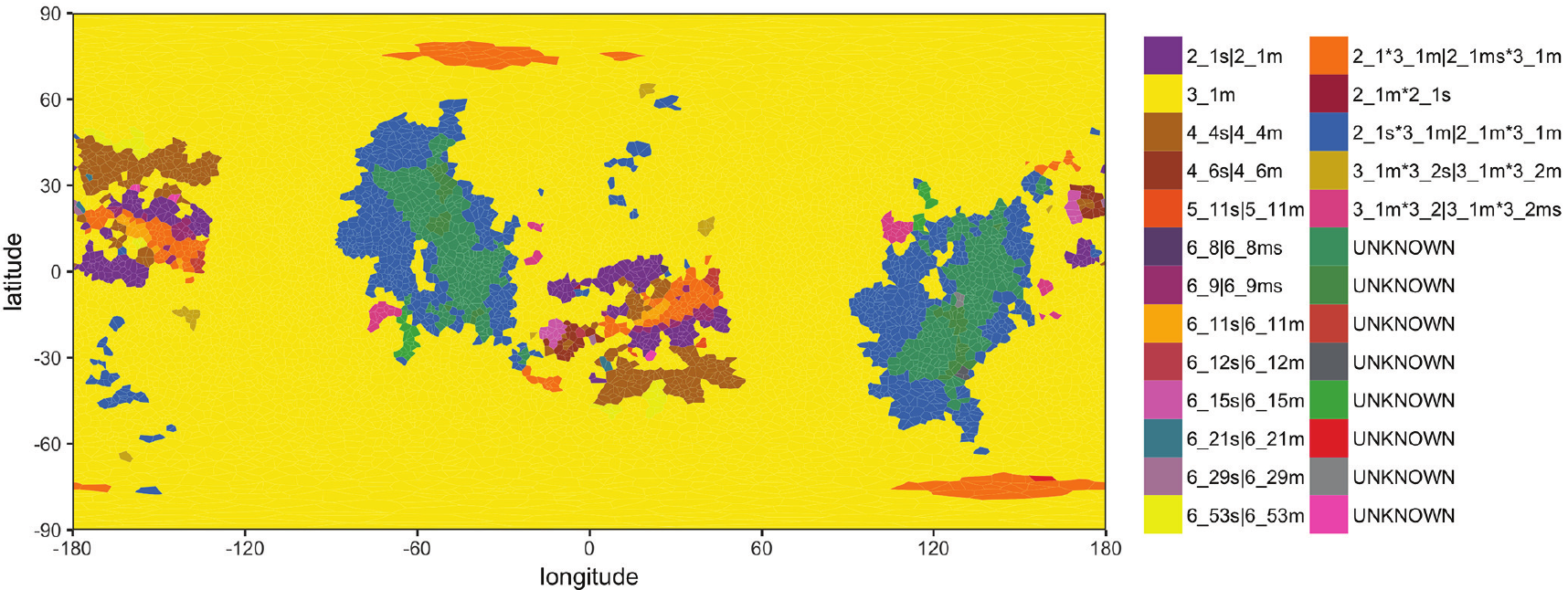}
\caption{The projection map for the protein N-acetyl-L-ornithine transcarbamylase complexed with N-acetyl-L-ornirthine (pdb code: 3kzn). For this map we used 10000 projections. We can see that the predominate knotoid type is $3_1^m$ since it corresponds to the region with the biggest area. The colour scale on the right shows the colour - knotoid type correspondence. Note that the $\vert$ symbol in the knotoid name stands for ``or'', meaning that knotoid names separated with $\vert$ share the same arrow polynomial.  The label Unknown corresponds to knotoids with crossing number $> 6$. Finally, knotoid composition is indicated by an asterisk $\ast$. } 
\label{fig:3kzn}
\end{figure}

The general idea of this approach is to characterise the {\it global topology} of each protein chain by assigning a knotoid type to it. The term global topology refers to the topology of the whole protein chain. The modelled protein is considered inside a large enough sphere (one can also consider its convex hull), centred at the centre of mass of the chain. Each one of the $S^2$-many generic projections on planes around the sphere determines to a knotoid  \cite{goundaroulis2017}. Note that different planes of projection may yield different knotoid diagrams and so determining the knotoid type of a protein using a single projection is far from being accurate.  In principle, the knotoids approach considers the knottedness of any open-ended curve embedded in 3-space as a distribution (also called {\it spectrum}) of knotoid types. In what follows we will call spectrum the list of different knotoid types appearing in the distribution. The knotoid with the highest probability in the distribution of knotoid types over all projections, characterises the protein and is called {\it predominate}. In order to obtain an unbiased overview of a protein's topology, all possible projections have to be considered but since this is not computationally feasible, the distribution is approximated by sampling from the space of all possible projections. 
To avoid a change of knotoid type under ambient isotopy, two infinite lines are introduced each time a projection plane is chosen. Each line passes through one of the endpoints and they are perpendicular to the projection plane \cite{gugumcu2017, goundaroulis2017}. Additionally, an algorithm similar to KMT \cite{muthukumar,taylor} that simplifies the curve but preserves its underlying topology is also applied \cite{gugumcu2017, goundaroulis2017} in order to make computations of knotoid types more efficient.
The knotoid types are determined using invariants. 
For this work we have used the arrow polynomial for knotoids in $S^2$ \cite{gugumcu2017}.

The above are often summarised in a plot called {\it the projection map} \cite{goundaroulis2017}. The projection map is in fact the Voronoi diagram $\mathcal{VD}$ of the corresponding Delaunay triangulation of $S^2$ with respect to the set of points sampled from $S^2$. Furthermore, each cell of the projection map is colour-coded according to the knotoid type it produces. By construction, there is a bijection between the number of different colours in a projection map and the number of different knotoids in the spectrum of the analysed curve (see Fig.~\ref{fig:3kzn}). 
Both the spectrum and the projection map depend heavily on the sample size of projections; if too few points are sampled, then the overall topology of the analysed curve will not be well approximated. In fact, the optimal size for the set of sampled projections remains an open question.

\begin{figure}[!tbh]
\includegraphics[scale=1]{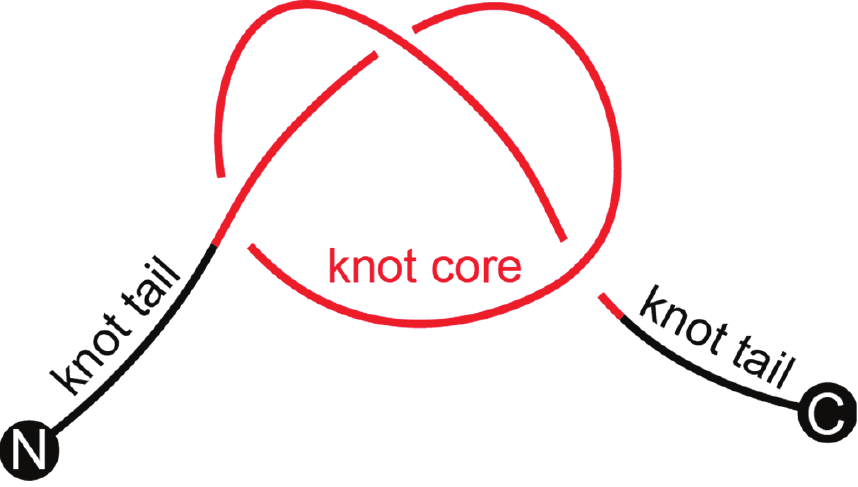}
\caption{The core (in red) and the two tails (in black) of an open knot representing a protein with a trefoil knot. The two beads on either side represent the two termini, N and C respectively, of the protein chain.} 
\label{fig:knot_core}
\end{figure}

In several cases the protein chain doubles back right after forming a knot. This results in unknotting the knot previously formed. This type of proteins are called slipknotted. In order to detect these local knots one has to study the {\it local topology} of any given protein by analysing all possible subchains. During this analysis one can also determine the {\it knotted core} of a protein, that is the shortest subchain forming a knot. The subchains located before and after the knotted core are called {\it N-tail} and the {\it C-tail}, where N and C are the two termini of the protein (see Fig.~\ref{fig:knot_core}). The vast majority of proteins are enzymes where there is an overlap between knotted cores and the respective enzymatic active sites. Indeed, these sites are either located inside or close to the knotted core of the chain. In this context, it was shown that knotted cores of proteins play a vital role in some aspects of a protein's structure and function \cite{mallam2007, christian}. Moreover, in \cite{dabrowski2016}, it was observed that formation of deep knots with characteristic structural motifs provides a favourable environment for active sites in enzymes. However, it is important to mention that the knots are not necessary for the formation of regions with increased intra-chain contacts.  The subchain analysis can be computationally heavy, depending on the total length of the protein. Therefore, it would be useful if one could probe the depth of a knot straight from the global topology analysis.

In this section we will use Theorem~\ref{characterisation} to provide approximations for the two questions that were discussed in this section, namely:
\begin{enumerate}
\item[a.] How many projections are required in order to have an accurate overview of a protein's topology?
\item[b.] Can we determine whether a protein is deeply knotted or not by looking at its knotoid distribution?
\end{enumerate}

\subsection{Approximating the sample size of projections}\label{sec:application1}
Consider a generic projection  of a protein chain on some plane and let $k$ be the corresponding knotoid. If we continuously perturb the projection direction until the knotoid type changes to $k^\prime$, we will obtain a pair of knotoids with $d_f (k ,k^\prime) =1$. 
We will use this idea to make measurements on projection maps that are obtained from sample sets of increasing size, in order to approximate numerically an optimal sample size  of projections $s$ for a given protein.

As mentioned earlier, the spectrum of a protein chain depends on the number of projections. Therefore, there is a higher chance for cells corresponding to knotoids with $d_f >1$ to appear next to each other. Since the predominate knotoid corresponds to the largest region of the projection map, it is suffices to focus on the discrepancies between the region of the predominate and its immediate neighbours. For this, we define the \emph{interface error}, $er(s)$, associated to the sample of size $s$.  The interface error is the ratio of the total number of  pairs of adjacent regions adjacent to the region of the predominate knotoid $k_0$ that correspond to knotoids $k_i$, for which Theorem~\ref{characterisation} gives $d_f(k_0 ,k_i)>1$ over the number of all adjacent regions to $k_0$. By gradually increasing the number of projections, the triangulation will become progressively finer and so the possibility of having pairs of adjacent cells with $d_f>1$ will effectively decrease, hence the error $er(s)$ will decrease.

For our experiment, we concentrated on proteins with predominate knotoid type $3_1$. There are $517$\footnote{As for August 2020.} such proteins in total deposited in the Protein Data Bank \cite{pdb}, according to  \cite{knotprot1, knotprot}. All proteins of interest are analysed using 50, 100, 500, 1000, 5000 and, finally, 10000 projections.  Each time $er(s)$ for the respective Voronoi diagram is computed. In more detail, for each Voronoi diagram we build a graph  where the vertices correspond to the regions of the map and the edges correspond to common boundaries between regions. We compute $er(s)$ by counting the number of graph edges between $3_1$ and knotoids that give $d_f>1$ and taking the ratio over the total number of edges that have $3_1$ as one of its endpoints, loops excluded. In Table \textbf{S2} (in the Supplementary Information) we present the $f$-distances of $3_1$ from all knotoids with 5 and with 6 crossings.  The number of unique knotoid types in the Voronoi diagram corresponds to the size of $Spec(s)$. Note that in this analysis we don't consider composite knotoids. Finally, we compute the average interface error for one each of the six cases of projections sample size. 

\begin{figure}[!tbh]
\includegraphics[scale=.6]{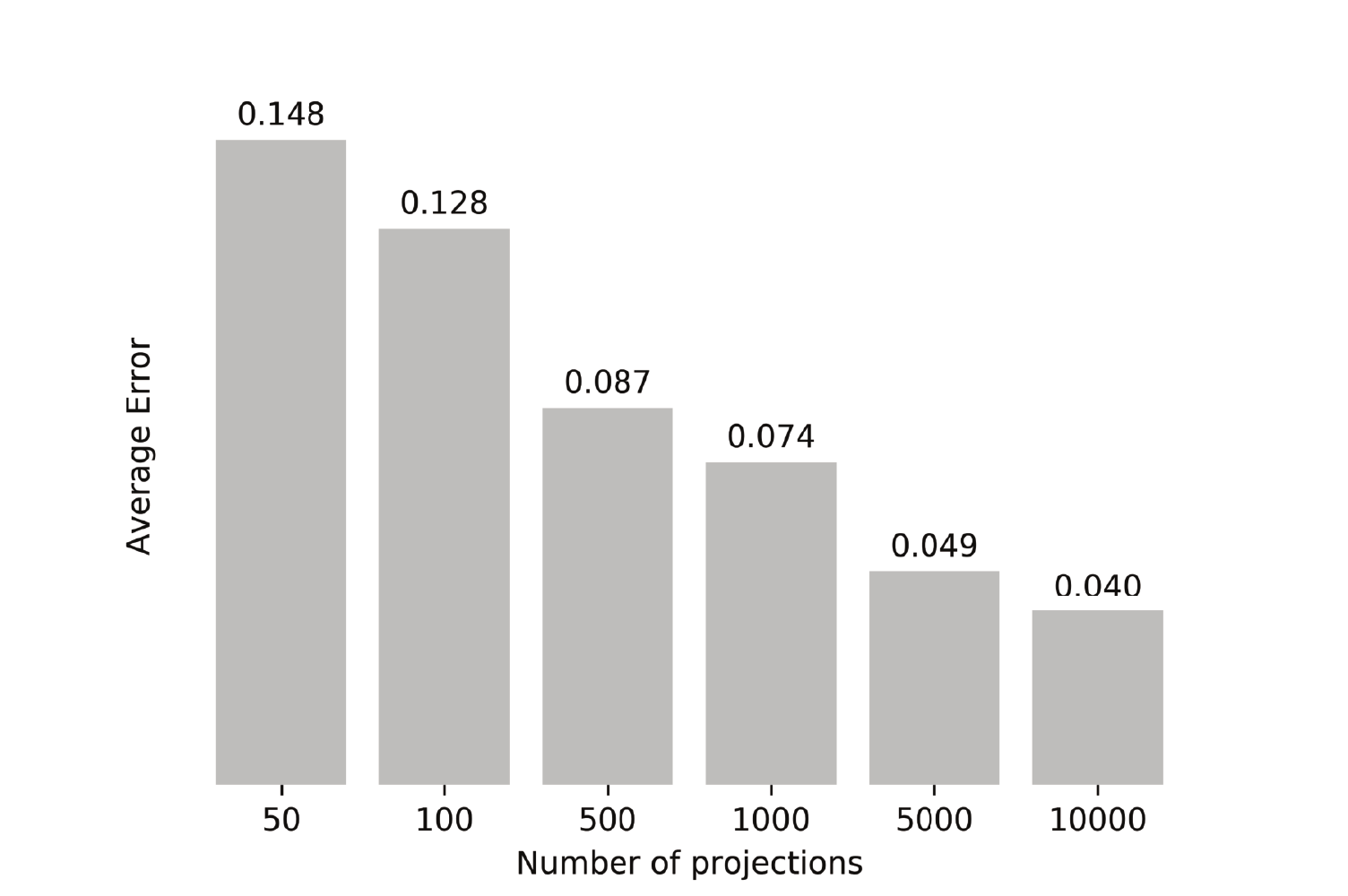}
\caption{The diminishing effect of increasing the number of projections on the average interface error $er(s)$ of a protein chain.} 
\label{fig:correlations}
\end{figure}

As shown in Fig.~\ref{fig:correlations}, we observe a gradual decrease in the average error, as the number of projections is increased. Moreover, the difference between 50 and 100 projections is somehow statistically significant (p = 0.010, Mann-Whitney U test) while the differences for groups with greater or equal than 100 projections is always statistically significant (p-value negligible, Mann-Whitney U test).

Our analysis suggests that at 10000 projections we will have the most accurate overview of the topology of a protein. 

This is a number of projections that can be handled very efficiently by several online servers for knotted proteins, e.g. \cite{knotprot}. However, our aim is to analyse accurately a protein locally using a personal computer. With this in mind, we assume that 5000 projections provide an approximation reliable enough.

\subsection{Probing the depth of knotted proteins}\label{sec:application2}
In this section we discuss a method to estimate if a protein is deeply knotted, without passing through the computational expensive subchain analysis \cite{knotprot}. 

\begin{defi}
Let $k$ be a knotoid in $S^2$. A subknotoid of $k$, denoted by $k_{\rm s}$, is a knotoid such that:
\[
d_f(k, k_{\rm s}) < u_f( k) \ \  \mbox{and} \ \ u_f(k_{\rm s}) < u_f(k)
\]
Where $u_f(k_{\rm s})$ and $u_f(k)$ are the unravelling numbers of $k_{\rm s}$  and $k$ (recall Remark \ref{rmk:unknotting}).
\end{defi}

Unknotting $k$ via forbidden moves induces  a sequence of non-trivial subknotoids.  We denote this set of knotoids by $\mathcal{U}_{k}$, the unravelling set of $k$.\\

A deep protein knot (or more general an open polygonal knot) is also usually tightly knotted as the length of its knotted core is relatively small, compared to the knot's overall length. The converse is not always true  since we can easily find an example of a tight knot very close to one the termini of the chain. Our assumption is that the deeper a knot is, the smaller the total area of the subknotoids will be in the knotoid distribution. This is because  the two tails of the knot are less probable to interact with the rest of the chain in a way that will produce an unknotting forbidden move.  Similarly, we argue that this will increase the probability of having regions corresponding to more complex knotoids. To quantify this assumption we define the notions of  \emph{Unravelling Area}  $\mathcal{A}_{\text{un}}$ and \emph{Weighted Area}  $\mathcal{A}_{\text{w}}$ as two different sums, namely:

\begin{equation}\label{eq:areaI}
\mathcal{A}_{\rm un} =  1 - \frac{\sum_{k \in \mathcal{U}_{k^p}} \mathcal{A}_k}{\mathcal{A}}
\end{equation}

\begin{equation}\label{eq:areaW}
\mathcal{A}_{\rm w} =  \frac{\sum_{k \in \mathcal{S}} u_f(k) \cdot  \mathcal{A}_k}{u_f(k^p) \cdot \mathcal{A}},
\end{equation}

where $k^p$ is the predominate knotoid, $\mathcal{U}_{k^p}$ is the unravelling  set of $k^p$, $\mathcal{S}$ is the knotoid spectrum, $\mathcal{A}$ is the area of the sphere, and $\mathcal{A}_k$ is the total area in the sphere of regions that correspond to knotoid $k$.

\begin{rmk}
Note that by construction $\mathcal{A}_{\rm un}$ take values between $0$ and $1$, and it defines a meauser of how ``simple'' the protein is. The closer $\mathcal{A}_{\rm un}$ is to $0$, the ``simpler'' our protein is with respect to a curve having only the predominate $k^p$  in its knotoid spectrum (or $k^p$ and knotoids with $u_f(k) \geq u_f(k^p)$). Similarly, $\mathcal{A}_{\rm w}$ can be seen as a measure of how much the geometry and topology of a protein differ from the case of a curve having only $k^p$ in its knotoid spectrum, with $\mathcal{A}_{\rm w}>1$ corresponding to more complex geometries. 
\end{rmk}

As mentioned earlier, the depth of a protein knot is determined by progressively trimming the chain from each side and evaluating its knot type until the knotted core is obtained. Moreover, as mentioned in the introduction, proteins are often considered deep when at least 20 amino acids are trimmed from either of their termini before converting their knot type. This definition however can be problematic for large proteins with large and loose knotted cores. 

For this reason we consider $D$, the depth of a knotted (open) curve as:
\[
D(p) = \frac{\ell_N (p)\, \ell_C (p)}{\ell_T(p)^2}
\] 
where $p$ is the open curve representing the protein chain, $\ell_T(p)$ is the total length of the chain $p$, $\ell_N(p)$ is the length of the N-tail of $p$, and $\ell_C(p)$ is the length of the C-tail of $p$. By definition, $D$ is continuous and reflects the fact that some proteins are more deeply knotted than others.

\begin{figure}[!tbh]
\includegraphics[scale=.6]{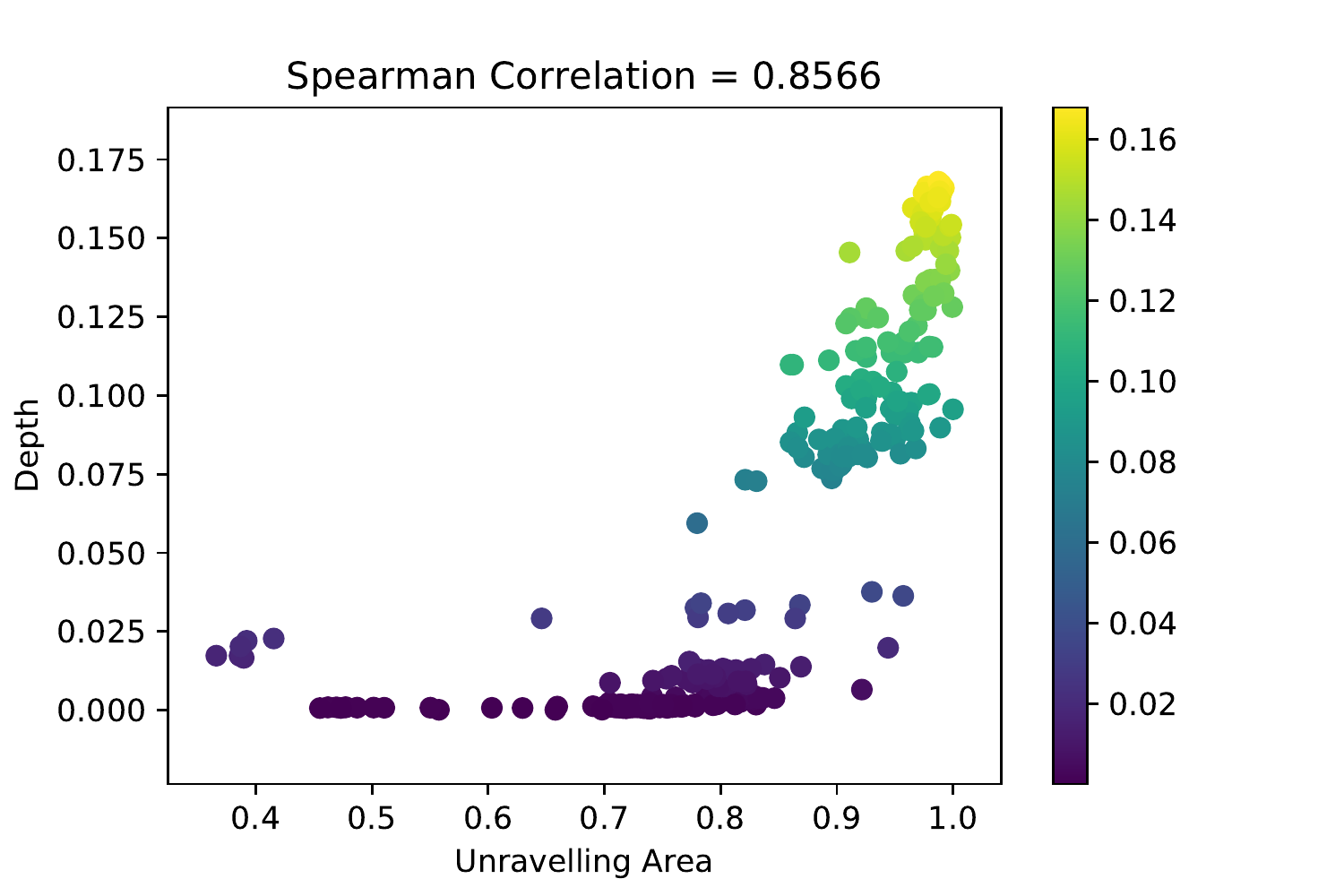}
\caption{Scatterplot of values of $\mathcal{A}_{\rm un}$ against $D(p)$. The color map on the right indicates the different values of $D(p)$. The higher the value, the deeper a knot is. Two distinct clusters of points, in terms of $D(p)$, are visible in the graph indicating a well defined separation between deeply and shallowly knotted proteins.} 
\label{fig:scatter}
\end{figure}

In the spirit of the previous section, we will work with all proteins that form a $3_1$ knot.  Since from Theorem \ref{characterisation} we have that $u_f(3_1) =2$,  the subknotoids for $3_1$ are such that $d_f(3_1, k_{s}) < 2 \ \  \mbox{and} \ \ u_f(k_{s}) < 2$. Theorem \ref{characterisation} again implies that $2_1$ is the only knotoid (among knotoids with $6$ or fewer crossings) having distance $1$ from both $3_1$ and $0_1$. In fact, it is straightforward to check that all the knotoids $k$ with less than $6$ crossings and $d_f(k,3_1) =1$ have distance $>1$ from $0_1$ using inequality \ref{bound2}. In this case Eq.~\ref{eq:areaI} becomes: 

\[ 
\mathcal{A}_{\rm un} =  \frac{\mathcal{A}_{0_1}+ \mathcal{A}_{2_1}}{\mathcal{A}}
\]

Next, we plotted the pairs \large($\mathcal{A}_{\rm un}$, $D(p)$\large) and \large($\mathcal{A}_{\rm w}$, $D(p)$\large) for all of the 517 studied proteins (see Figs~\ref{fig:scatter} and \ref{fig:scatter2}). In more detail, we first compute the projection map for each protein using the optimal value of 5000 projections that was determined in the previous section. From each projection map we then compute the corresponding $\mathcal{A}_{\rm un}$, $\mathcal{A}_{\rm w}$.  The quantities $\ell_N$, $\ell_C$ and $\ell_T$ that are required to compute $D(p)$ are taken from \cite{knotprot}. The unravelling numbers of the various knotoids can be found in Table \textbf{S3} (shown in the Supplementary Information), and are obtained using inequalities \ref{bound2} and \ref{bound1}.

\begin{figure}[!tbh]
\includegraphics[scale=.6]{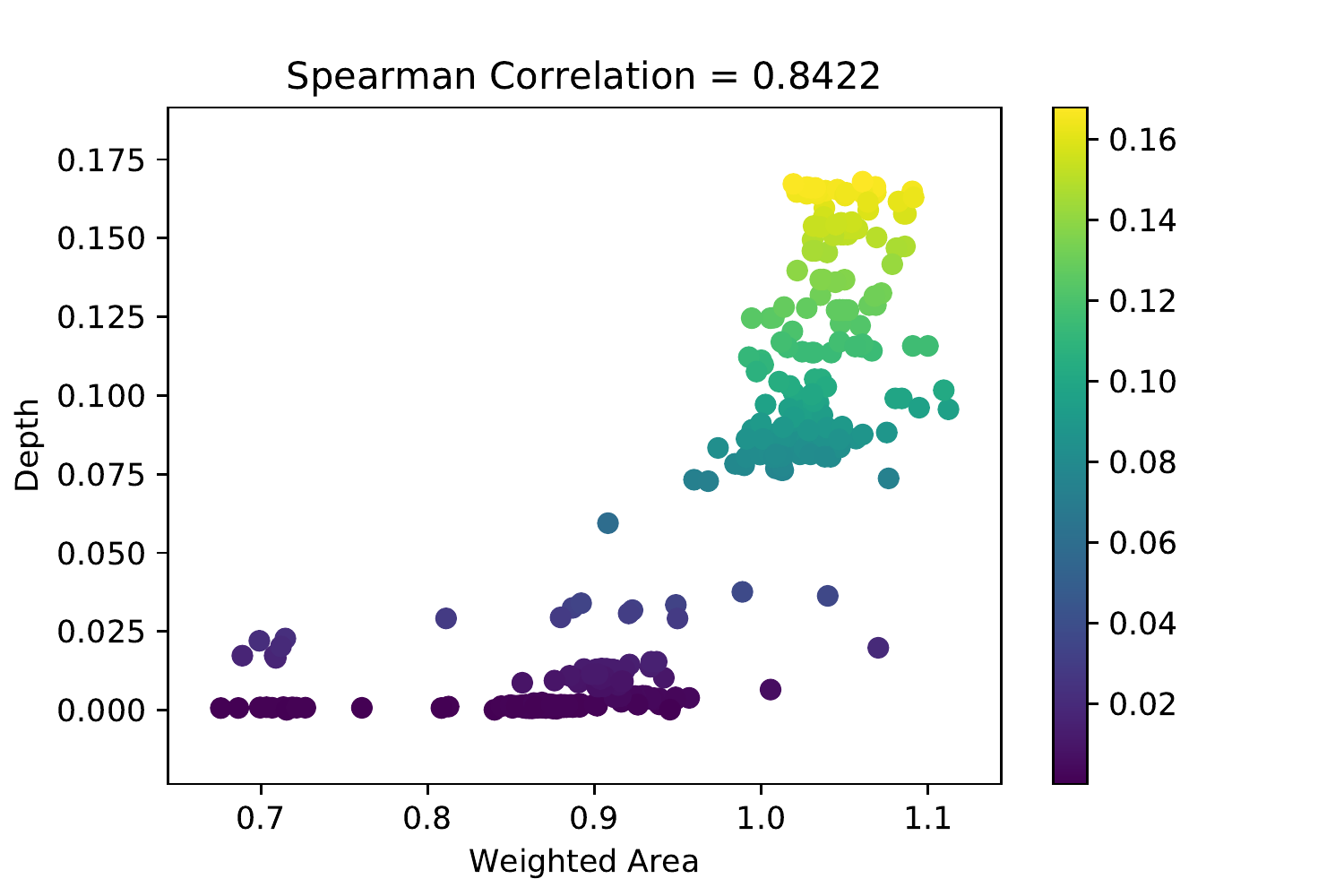}
\caption{Scatterplot of values of $\mathcal{A}_{\rm w}$ against $D(p)$. The color map on the right indicates the different values of $D(p)$. The higher the value, the deeper a knot is. Two distinct clusters of points, in terms of $D(p)$, are visible in the graph indicating a well defined separation between deeply and shallowly knotted proteins.} 
\label{fig:scatter2}
\end{figure}

The Spearman correlation between $\mathcal{A}_{\rm un}$ and $D(p)$ is 0.8566, while between $\mathcal{A}_{\rm w}$ and $D(p)$ is 0.8422, indicating a strong monotonous relation in both cases. The key observation here is that in both figures, the proteins form  two separate clusters, one in upper right corner of the scatterplot and one in the lower-lower left. In the lower cluster includes $3_1$-proteins with lower values of $D$ and mainly low-to-medium values of $\mathcal{A}_{\rm un}$ and $\mathcal{A}_{\rm w}$ respectively, while the upper one contains $3_1$-proteins with higher values. This clustering suggests a value of $D(p) = 0.06$ as threshold to classify as deeply knotted or shallow a given protein.  We note here that there are four proteins (PDB codes: 5yud, 1by7, 4h6v and 1f48) that despite having relatively small values of $D$, they have quite complex knotoid distributions. This could be due to the fact that in all cases the knotted core is located very close to one of the two termini and thus only a few number of amino acids need to be trimmed. This leaves a rather long tail on the other side that can potentially interact with the rest of the chain, forming more complex knotoids upon projection.

Finally, the histograms in Figure \ref{fig:histogram} of the two different groups indicate that, in principle, a value $\mathcal{A}_{\rm un} > 0.85$ and a value of $\mathcal{A}_{\rm w} > 0.95$ most probably suggests that a protein has deeply knotted trefoil.

\begin{figure}[ht]
\begin{subfigure}[t!]{.8\textwidth}
  \centering
  \includegraphics[width=.8\linewidth]{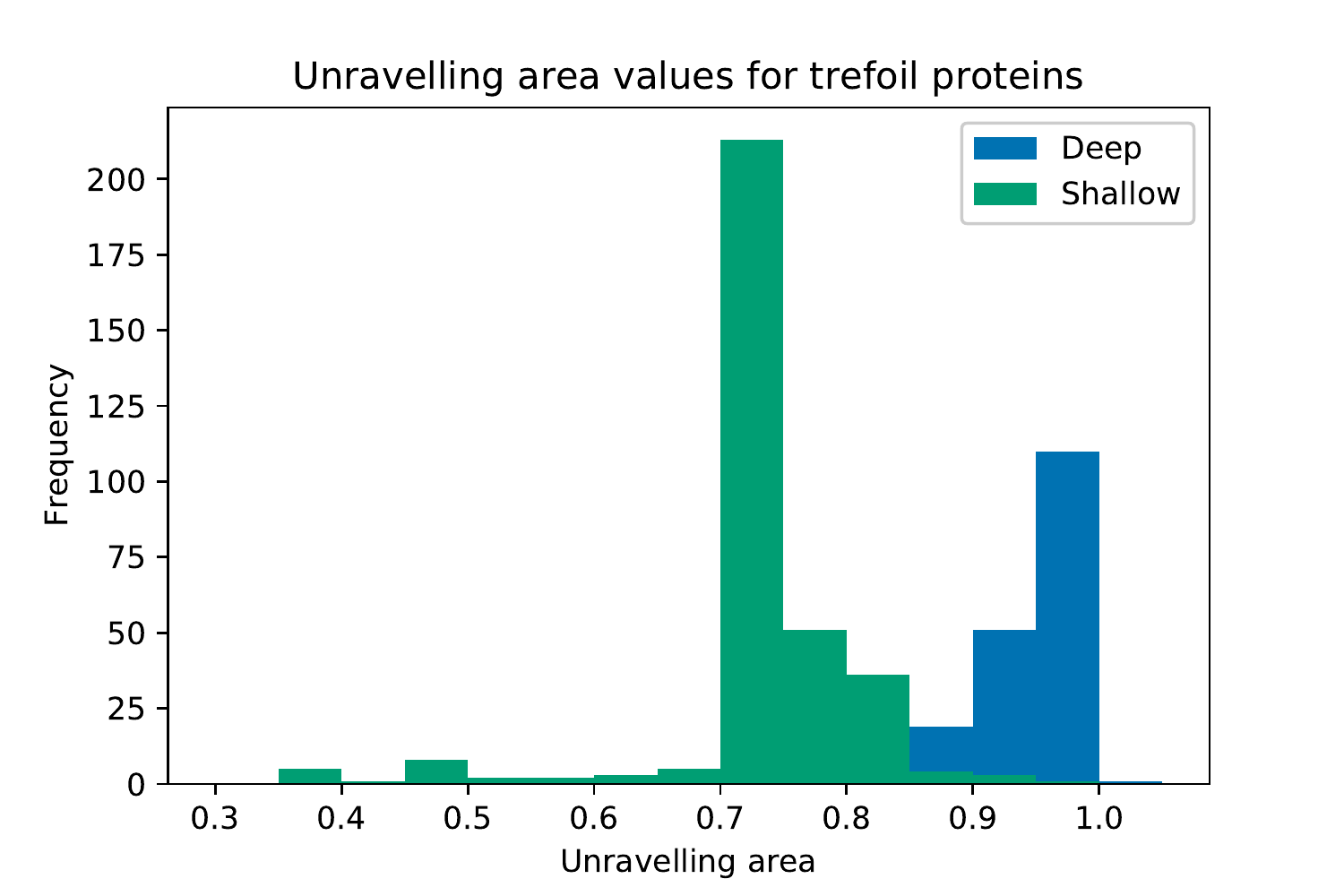}  
\end{subfigure}
\begin{subfigure}[t!]{.8\textwidth}
  \centering
  \includegraphics[width=.8\linewidth]{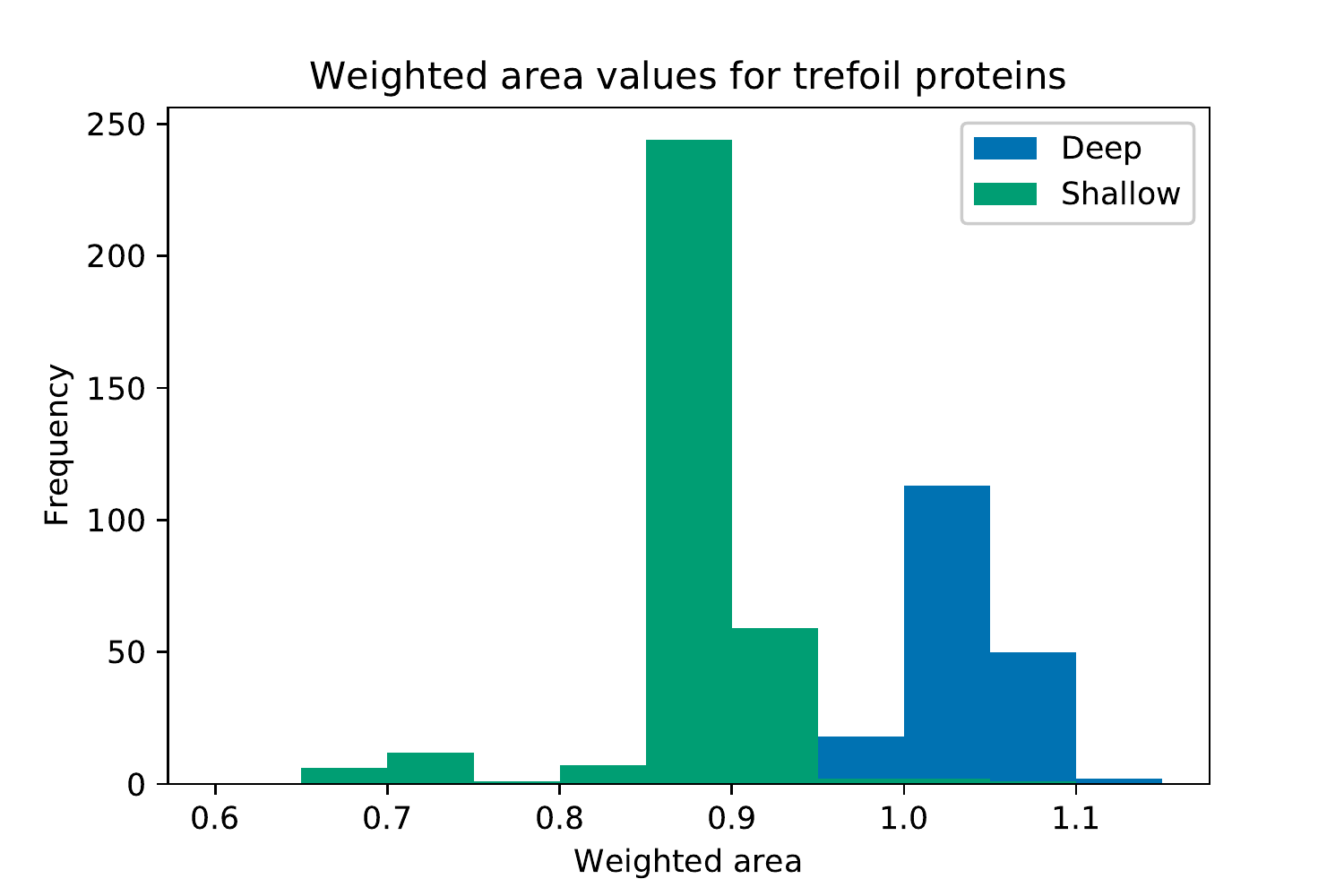}  
\end{subfigure}
\caption{The two histograms for the deeply knotted protein group and the shallowly knotted group. Here we consider deep those proteins $p$ with $D(p) > 0.06$. }
\label{fig:histogram}
\end{figure}

Concluding, our computations show how, remarkably, we can infer subtle information about the geometry of the protein and about its knot depth, directly from a refined  topological analysis based on the properties of knotoids and on Theorem \ref{characterisation}.  It would be interesting to further explore $\mathcal{A}_{\rm un}$ and $\mathcal{A}_{\rm w}$ for other proteins knots and determine the appropriate cutoff values that suggest deep knots of specific type.

\section*{Acknowledgements}
The authors would like to thank Dorothy Buck, Heather Harrington, Marc Lackenby and Andrzej Stasiak for several fruitful conversations and guidance throughout the development of this work. A.B. acknowledges support by the RS-EPSRC grant ``Algebraic and topological approaches for genomic data in molecular biology'' EP/R005125/1. A.B. is part of the Oxford TDA centre and is currently a Hooke research fellow.
We also thank the COST Action European Topology Interdisciplinary Action (EUTOPIA) CA17139 for supporting collaborative meeting of the authors.

\vspace{0.8cm}
\textbf{Author contributions:}
AB and DG conceived, designed and performed the research. AB and DG wrote, revised and approved the manuscript. Both authors contributed equally to this work.
\vspace{0.8cm}

\textbf{Data availability:} The data is available on the GitHub repository: https://github.com/dgound/f-distance

\newpage  

\bibliography{knotoidsdistance1}{}
\bibliographystyle{amsplain}

\providecommand{\bysame}{\leavevmode\hbox to3em{\hrulefill}\thinspace}
\providecommand{\MR}{\relax\ifhmode\unskip\space\fi MR }
\providecommand{\MRhref}[2]{%
  \href{http://www.ams.org/mathscinet-getitem?mr=#1}{#2}
}
\providecommand{\href}[2]{#2}

\newpage

\section*{Supplementary Information}
\subsection*{The $f$-distances tables}\label{appendix:a}

We recall here the inequalities giving the lower bounds for the $f$-distance between two knotoids $k_1$ and $k_2$, in terms of the $H_2$-distance of their lifts $\gamma_S(k_1) = (K_1, \tau_1)$ and $\gamma_S(k_2) = (K_2, \tau_2)$ and in terms of the \emph{Gordian distance} of the corresponding pairs $(K^{+}_{k_1}, K^{-}_{k_1})$ and $(K^{+}_{k_2}, K^{-}_{k_2})$ (see the Main Manuscript for all the details).

\begin{equation}\label{bound1}
d_f(k_1, k_2) \geq d_{H_2}(K_1, K_2). 
\end{equation}

\begin{equation}\label{bound2}
 d_f(k_1, k_2) \geq d_{\text{pair}}((K^{+}_{k_1}, K^{-}_{k_1}), (K^{+}_{k_2}, K^{-}_{k_2})).  
\end{equation}

\vspace{1cm}
Below, we show Table \ref{tab:cr4}, containing he $f$-distances between knotoids (up to rotation, symmetric reflection and mirror reflection) with minimal crossing number $\leq 4$, and Table \ref{tab:cr6}, containing the $f$-distances between the $3_1$ knotoid and knotoids (up to rotation, symmetric reflection and mirror reflection) with minimal crossing number $\leq 6$.

\footnotesize

\begin{table}[H]
\centering
\begin{tabular}{@{}lllllllllllll@{}}
\toprule
          & $0_1$   & $2_1$   & $3_1$   & $3_2$     & $4_1$     & $4_2$     & $4_3$     & $4_4$     & $4_5$      & $4_6$      & $4_7$      & $4_8$     \\ \midrule
$0_1$     & 0       & 1       & \blue{2} & 1         & \blue{2}   & 1         & \blue{2}   & \blue{2}   & \blue{2}    & \blue{3}    & 1          & 1         \\
$2_1 $    & 1       & 0       & 1       & \blue{2}   & \blue{3}   & 1         & 1         & 1         & 1          & \blue{2}    & 1          & 1         \\
$3_1  $   & \blue{2} & 1       & 0       & \blue{3}   & \blue{4}   & \blue{2}   & \blue{2}   & 1         & \blue{2}    & 1          & \red{2}     & \blue{2}   \\
$3_2   $  & 1       & \blue{2} & \blue{3} & 0         & 1         & \blue{2}   & \blue{2-3} & \blue{3}   & 1          & \blue{3-4}  & \blue{2}    & \blue{2}   \\
$4_1    $ & \blue{2} & \blue{3} & \blue{4} & 1         & 0         & \blue{3}   & \blue{3-4} & \blue{4}   & \blue{2}    & \blue{ 4-5} & \blue{3}    & \blue{3}   \\
$4_2 $    & 1       & 1       & \blue{2} & \blue{2}   & \blue{3}   & 0         & \grey{1-2} & 1         & \blue{2}    & \blue{2}    & \grey{ 1-2} & 1         \\
$4_3 $    & \blue{2} & 1       & \blue{2} & \blue{2-3} & \blue{3-4} & \grey{1-2} & 0         & \blue{2}   & \blue{2}    & 1          & \grey{1-2}  & 1         \\
$4_4 $    & \blue{2} & 1       & 1       & \blue{3}   & \blue{ 4}  & 1         & \blue{ 2 } & 0         & \blue{2}    & 1 & \grey{1-2}  & 1         \\
$4_5 $    & \blue{2} & 1       & \blue{2} & 1         & \blue{2}   & \blue{2}   & \blue{2}   & \blue{2}   & 0          & \blue{2-3}  & \grey{1-2}  & \blue{2}   \\
$4_6 $    & \blue{3} & \blue{2} & 1       & \blue{3-4} & \blue{4-5} & \blue{2 }  & 1         & 1 & \blue{ 2-3} & 0          & \blue{2-3}  & \blue{2}   \\
$4_7 $    & 1       & 1       & \red{2}  & \blue{2}   & \blue{3}   & \grey{1-2} & \grey{1-2} & \grey{1-2} & \grey{1-2}  & \blue{2-3}  & 0          & \grey{1-2} \\
$4_8 $    & 1       & 1       & \blue{2} & \blue{2}   & \blue{3}   & 1         & 1         & 1         & \blue{2}    & \blue{2 }   & \grey{1-2}  & 0         \\ \bottomrule
\end{tabular}
\vspace{2mm}
\caption{The $f$-distance table for equivalence classes of knotoids with minimal crossing number $\leq 4$. In a few cases (\emph{e.g} for the pair $(4_1,4_6)$) lower and upper bounds do not coincide. In these cases we write upper and lower bounds separated by a dash, indicating the interval of possible values of the $f$-distances. Entries in the table are colour coded accordingly to how lower bounds were computed. Lower bounds for the entries in blue are computed using the inequality \ref{bound2}, while the ones in red using the inequality \ref{bound1}. We are not able to produce lower bounds for entries in orange.}
\label{tab:cr4}
\end{table}

\normalsize

\begin{table}[H]
\centering
\begin{tabular}{@{}llllllllllllll@{}}
\toprule
      & $3_1$&  &       & $3_1$&  &       & $3_1$&  &       & $3_1$&  &        & $3_1$\\ \midrule
$5_1$  & \blue{2}      &  & $6_6$  & \blue{3}      &  & $6_{35}$ & \blue{2}      &  & $6_{64}$ & \blue{3}      &  & $6_{93}$  & \blue{3}      \\
$5_2$  & \blue{2}      &  & $6_7$  & \blue{3}      &  & $6_{36}$ & \blue{2}      &  & $6_{65}$ & \red{2-3}     &  & $6_{94}$  & \blue{2}      \\
$5_3$  & \blue{2}      &  & $6_8$  & \grey{1-2}    &  & $6_{37}$ & \blue{2-4}    &  & $6_{66}$ & \blue{2}      &  & $6_{95}$  & \blue{2}      \\
$5_4$  & \blue{2}      &  & $6_9$  & \grey{1-2}    &  & $6_{38}$ & \blue{2-3}    &  & $6_{67}$ & \blue{2}      &  & $6_{96}$  & \blue{2}      \\
$5_5$  & \blue{3}      &  & $6_{10}$ & \blue{2}      &  & $6_{39}$ & \grey{1-2}    &  & $6_{68}$ & \blue{2}      &  & $6_{97}$  & \blue{2-3}    \\
$5_6$  & \blue{2}      &  & $6_{11}$ & \grey{1-2}    &  & $6_{40}$ & \blue{2}      &  & $6_{69}$ & \blue{2}      &  & $6_{98}$  & \blue{2}      \\
$5_7$  & \blue{3}      &  & $6_{12}$ & \blue{2}      &  & $6_{41}$ & \blue{2}      &  & $6_{70}$ & \blue{3}      &  & $6_{99}$  & \blue{2-4}    \\
$5_8$  & \blue{2}      &  & $6_{13}$ & \blue{2}      &  & $6_{42}$ & \blue{3}      &  & $6_{71}$ & \blue{4}      &  & $6_{100}$ & \blue{2-3}    \\
$5_9$  & \blue{2}      &  & $6_{14}$ & \blue{3}      &  & $6_{43}$ & \blue{3}      &  & $6_{72}$ & \blue{2}      &  & $6_{101}$ & \blue{3}      \\
$5_{10}$ & \blue{3}      &  & $6_{15}$ & 1                              &  & $6_{44}$ & \blue{2}      &  & $6_{73}$ & \blue{2}      &  & $6_{102}$ & \blue{3}      \\
$5_{11}$ & 1                              &  & $6_{16}$ & 1                              &  & $6_{45}$ & \grey{1-2}    &  & $6_{74}$ & \blue{3-4}    &  & $6_{103}$ & \blue{3}      \\
$5_{12}$ & \blue{2}      &  & $6_{17}$ & \blue{2}      &  & $6_{46}$ & \blue{3-4}    &  & $6_{75}$ & \blue{3}      &  & $6_{104}$ & \blue{3}      \\
$5_{13}$ & \blue{4}      &  & $6_{18}$ & \blue{2}      &  & $6_{47}$ & \blue{2}      &  & $6_{76}$ & \blue{3}      &  & $6_{105}$ & \blue{2}      \\
$5_{14}$ & \blue{3}      &  & $6_{19}$ & \blue{2-3}    &  & $6_{48}$ & \blue{2-3}    &  & $6_{77}$ & \blue{2}      &  & $6_{106}$ & 1                              \\
$5_{15}$ & \blue{4}      &  & $6_{20}$ & \blue{3-4}    &  & $6_{49}$ & \blue{3}      &  & $6_{78}$ & \blue{3}      &  & $6_{107}$ & \blue{2}      \\
$5_{16}$ & \red{2}       &  & $6_{21}$ & \red{2}       &  & $6_{50}$ & \blue{2-3}    &  & $6_{79}$ & \blue{3-4}    &  & $6_{108}$ & \blue{2-3}    \\
$5_{17}$ & \blue{2}      &  & $6_{22}$ & \blue{2-3}    &  & $6_{51}$ & \blue{2}      &  & $6_{80}$ & \blue{2-4}    &  & $6_{109}$ & \blue{2-3}    \\
$5_{18}$ & \blue{2}      &  & $6_{23}$ & \blue{2-3}    &  & $6_{52}$ & \blue{3}      &  & $6_{81}$ & \blue{2}      &  & $6_{110}$ & \blue{3}      \\
$5_{19}$ & \blue{2}      &  & $6_{24}$ & \red{2-3}     &  & $6_{53}$ & 1                              &  & $6_{82}$ & \blue{2-3}    &  & $6_{111}$ & \blue{2}      \\
$5_{20}$ & 1                              &  & $6_{25}$ & \blue{3}      &  & $6_{54}$ & \blue{4}      &  & $6_{83}$ & 1-3                            &  & $6_{112}$ & \grey{1-2}    \\
$5_{21}$ & \blue{3}      &  & $6_{26}$ & \blue{2}      &  & $6_{55}$ & \blue{2}      &  & $6_{84}$ & \blue{2}      &  & $6_{113}$ & \blue{3}      \\
$5_{22}$ & \blue{2}      &  & $6_{27}$ & \red{2-3}     &  & $6_{56}$ & \blue{2}      &  & $6_{85}$ & \blue{2-3}    &  & $6_{114}$ & \blue{2}      \\
$5_{23}$ & \blue{3}      &  & $6_{28}$ & \blue{2}      &  & $6_{57}$ & \blue{4}      &  & $6_{86}$ & \blue{2-4}    &  & $6_{115}$ & \grey{1-2}    \\
$5_{24}$ & \grey{1-2}    &  & $6_{29}$ & \blue{2}      &  & $6_{58}$ & \blue{2}      &  & $6_{87}$ & \blue{2}      &  & $6_{116}$ & \blue{2}      \\
$6_1$  & \blue{4}      &  & $6_{30}$ & \grey{1-2}    &  & $6_{59}$ & 1                              &  & $6_{88}$ & 1                              &  & $6_{117}$ & \blue{3}      \\
$6_2$  & \blue{2}      &  & $6_{31}$ & \blue{3}      &  & $6_{60}$ & \blue{3}      &  & $6_{89}$ & \blue{2}      &  & $6_{118}$ & 1-3                            \\
$6_3$  & \blue{2}      &  & $6_{32}$ & \blue{3}      &  & $6_{61}$ & \blue{3}      &  & $6_{90}$ & \red{2}       &  & $6_{119}$ & \blue{2-3}    \\
$6_4$  & \blue{3}      &  & $6_{33}$ & \blue{3}      &  & $6_{62}$ & \grey{1-2}    &  & $6_{91}$ & \red{2-3}    &  & $6_{120}$ & \blue{2-3}    \\
$6_5$  & \blue{2}      &  & $6_{34}$ & \blue{2-3}    &  & $6_{63}$ & \blue{2-3}    &  & $6_{92}$ & \red{2-3}     &  & $6_{121}$ & \blue{2-4}    \\* \bottomrule
\end{tabular}
\vspace{2mm}

\caption{The $f$-distances between equivalence classes of knotoids with minimal crossing number $\leq 6$ and the $3_1$ knotoid. In a few cases (\emph{e.g} for the $6_{99}$ knotoid) lower and upper bounds do not coincide. In these cases we write upper and lower bounds separated by a dash, indicating the interval of possible values of the $f$-distances. Entries in the table are colour coded accordingly to how lower bounds were computed. Lower bounds for the entries in blue are computed using the inequality \ref{bound2}, while the ones in red using the inequality \ref{bound1}. We are not able to produce lower bounds for entries in orange. }
\label{tab:cr6}
\end{table}

\begin{table}[H]
\centering
\begin{tabular}{@{}llllllllllllll@{}}
\toprule
& $u_f$&       & $u_f$&         & $u_f$&  & $u_f$&     &$u_f$\\ \midrule
$5_{1}$  & \blue{4}& $5_{2}$  & \blue{2}&$5_{3}$  & \blue{1}&$5_{4}$  & \grey{1-2} &$5_{5}$  & \blue{1}    \\
$5_{6}$  & \blue{1}&$5_{7}$  & \blue{1}&$5_{8}$  & \blue{2}&$5_{9}$  & \blue{1}&$5_{10}$  & \blue{2}    \\
$5_{11}$  & \blue{2}&$5_{12}$  & \blue{3}&$5_{13}$  & \blue{2}&$5_{14}$  & \blue{2}&$5_{15}$  & \blue{2}    \\
$5_{16}$  & \blue{2}&$5_{17}$  & \blue{2}&$5_{18}$  & \blue{1}&$5_{19}$  & \blue{1}&$5_{20}$  & \blue{2}    \\
$5_{21}$  & \blue{1}&$5_{22}$  & \blue{1}&$5_{23}$  & \blue{1}&$5_{24}$  & \blue{2}&$6_{1}$  & \blue{2}    \\
$6_{2}$  & \blue{2}&$6_{3}$  & \blue{2}&$6_{4}$  & \grey{1-2} &$6_{5}$  & \blue{2}&$6_{6}$  & \blue{2}    \\
$6_{7}$  & \blue{3}&$6_{8}$  & \blue{2}&$6_{9}$  & \blue{2}&$6_{10}$  & \grey{1-2} &$6_{11}$  & \blue{2}    \\
$6_{12}$  & \blue{4}& $6_{13}$  & \blue{2}&$6_{14}$  & \blue{2}&$6_{15}$  & \blue{3}&$6_{16}$  & \blue{2}    \\
$6_{17}$  & \grey{1-2} &$6_{18}$  & \blue{2}&$6_{19}$  & \blue{1}&$6_{20}$  & \blue{2}&$6_{21}$  & \blue{3}    \\
$6_{22}$  & \blue{1}&$6_{23}$  & \blue{2}&$6_{24}$  & \blue{2}&$6_{25}$  & \blue{3}&$6_{26}$  & \blue{3}    \\
$6_{27}$  & \blue{1}&$6_{28}$  & \blue{2}&$6_{29}$  & \blue{4}& $6_{30}$  & \blue{2}&$6_{31}$  & \blue{2}    \\
$6_{32}$  & \blue{1}&$6_{33}$  & \blue{3}&$6_{34}$  & \blue{1}&$6_{35}$  & \blue{2}&$6_{36}$  & \grey{1-2}    \\
$6_{37}$  &\grey{1-2} &$6_{38}$  & \blue{2}&$6_{39}$  & \blue{3}&$6_{40}$  & \blue{3}&$6_{41}$  & \grey{1-2}    \\
$6_{42}$  & \blue{2}&$6_{43}$  & \blue{2}&$6_{44}$  & \blue{2}&$6_{45}$  & \blue{2}&$6_{46}$  & \blue{2}    \\
$6_{47}$  & \grey{1-2}&$6_{48}$  & \blue{2} &$6_{49}$  & \blue{1}&$6_{50}$  & \blue{3}&$6_{51}$  & \blue{1}    \\
$6_{52}$  & \blue{2}&$6_{53}$  & \blue{2}&$6_{54}$  & \blue{2}&$6_{55}$  & \blue{2}&$6_{56}$  & \blue{2-3}    \\
$6_{57}$  & \blue{2}&$6_{58}$  & \blue{4}& $6_{59}$  & \blue{2}&$6_{60}$  & \blue{2}&$6_{61}$  & \blue{3}    \\
$6_{62}$  & \blue{2}&$6_{63}$  & \blue{1}&$6_{64}$  & \blue{2}&$6_{65}$  & \blue{2}&$6_{66}$  & \blue{3}    \\
$6_{67}$  & \blue{2}&$6_{68}$  & \blue{2-3} &$6_{69}$  & \blue{2}&$6_{70}$  & \blue{3}&$6_{71}$  & \blue{2}    \\
$6_{72}$  & \blue{3}&$6_{73}$  & \blue{2}&$6_{74}$  & \blue{2-3} &$6_{75}$  & \blue{4}& $6_{76}$  & \blue{3}    \\
$6_{77}$  & \blue{1}&$6_{78}$  & \blue{4}& $6_{79}$  & \blue{2}&$6_{80}$  & \blue{2-3} &$6_{81}$  & \blue{3}    \\
$6_{82}$  & \blue{1}&$6_{83}$  & \blue{2}&$6_{84}$  & \blue{3}&$6_{85}$  & \blue{2}&$6_{86}$  & \blue{2-3}    \\
$6_{87}$  & \blue{2-3} &$6_{88}$  & \blue{2}&$6_{89}$  & \blue{2}&$6_{90}$  & \blue{2}&$6_{91}$  & \blue{3-}    \\
$6_{92}$  & \blue{2-3} &$6_{93}$  & \blue{4}& $6_{94}$  & \blue{3}&$6_{95}$  & \blue{4}& $6_{96}$  & \blue{4}    \\
$6_{97}$  & \grey{1-2}&$6_{98}$  & \blue{3}&$6_{99}$  & \grey{1-2}&$6_{100}$  & \grey{1-2}&$6_{101}$  & \blue{2}    \\
$6_{102}$  & \blue{3}&$6_{103}$  & \blue{5}&$6_{104}$  & \blue{3}&$6_{105}$  & \blue{3}&$6_{106}$  & \blue{3}    \\
$6_{107}$  & \blue{2}&$6_{108}$  & \blue{1}&$6_{109}$  & \blue{1}&$6_{110}$  & \blue{2}&$6_{111}$  & \blue{2}    \\
$6_{112}$  & \blue{2}&$6_{113}$  & \blue{1}&$6_{114}$  & \grey{1-2} &$6_{115}$  & \blue{3}&$6_{116}$  & \blue{2}    \\
$6_{117}$  & \blue{3}&$6_{118}$  & \blue{2-3}&$6_{119}$  & \blue{4}& $6_{120}$  & \blue{2}&$6_{121}$  & \grey{1-2}    \\* \bottomrule

\end{tabular}
\vspace{2mm}

\caption{The unravelling number of knotoids with minimal crossing number $\leq 6$. In these cases we write upper and lower bounds separated by a dash, indicating the interval of possible values of the unravelling number. Entries in the table are colour coded accordingly to how lower bounds were computed. Lower bounds for the entries in blue are computed using the inequality \ref{bound2},while we are not able to produce lower bounds for entries in orange. }
\label{tab:cr6u}
\end{table}

\newpage

\subsection*{Experimental values}\label{appendix:b}

Below, we show Tables \ref{tab:cr4all1}, \ref{tab:cr4all2} and \ref{tab:cr4all3}, containing the experimental $f$-distances between all non-composite knotoid diagrams, including non-minimal crossing representations, with up to six crossings. These have been computed experimentally with the help of a computer program written in \texttt{python 3.7} (see the Main Manuscript for all the details). 

\footnotesize
\begin{table}[H]
\centering
\begin{tabular}{@{}lllllllllllllll@{}}
\toprule
&$0_1$   & $2_1$ & $2_1^m$ & $2_1^{ms}$ & $2_1^{s}$ & $3_1$ & $3_1^{m}$ & $3_2$ & $3_2^m$ & $3_2^{ms}$ & $3_2^s$ & $4_1$ & $4_2$ & $4_2^m$ \\ \midrule
$0_1$   & 0    & 1     & 1      & 1     & 1    & 2     & 2    & 1     & 1      & 1     & 1    & 2    & 1     & 1      \\
$2_1$   & 1    & 0     & 2      & 2     & 2    & 1     & 3    & 2     & 2      & 2     & 2    & 3    & 1     & 2      \\
$2_1^m$  & 1    & 2     & 0      & 2     & 2    & 3     & 1    & 2     & 2      & 2     & 2    & 3    & 2     & 1      \\
$2_1^{ms}$ & 1    & 2     & 2      & 0     & 2    & 1     & 3    & 2     & 2      & 2     & 2    & 3    & 2     & 2      \\
$2_1^s$  & 1    & 2     & 2      & 2     & 0    & 3     & 1    & 2     & 2      & 2     & 2    & 3    & 2     & 2      \\
$3_1$   & 2    & 1     & 3      & 1     & 3    & 0     & 4    & 3     & 3      & 3     & 3    & 4    & 2     & 3      \\
$3_1^m$  & 2    & 3     & 1      & 3     & 1    & 4     & 0    & 3     & 3      & 3     & 3    & 4    & 3     & 2      \\
$3_2$   & 1    & 2     & 2      & 2     & 2    & 3     & 3    & 0     & 2      & 2     & 1    & 1    & 2     & 2      \\
$3_2^m$  & 1    & 2     & 2      & 2     & 2    & 3     & 3    & 2     & 0      & 1     & 2    & 1    & 2     & 2      \\
$3_2^{ms}$ & 1    & 2     & 2      & 2     & 2    & 3     & 3    & 2     & 1      & 0     & 2    & 1    & 2     & 2      \\
$3_2^s$  & 1    & 2     & 2      & 2     & 2    & 3     & 3    & 1     & 2      & 2     & 0    & 1    & 2     & 2      \\
$4_1$   & 2    & 3     & 3      & 3     & 3    & 4     & 4    & 1     & 1      & 1     & 1    & 0    & 3     & 3      \\
$4_2$   & 1    & 1     & 2      & 2     & 2    & 2     & 3    & 2     & 2      & 2     & 2    & 3    & 0     & 2      \\
$4_2^m$  & 1    & 2     & 1      & 2     & 2    & 3     & 2    & 2     & 2      & 2     & 2    & 3    & 2     & 0      \\
$4_2^{ms}$ & 1    & 2     & 2      & 1     & 2    & 2     & 3    & 2     & 2      & 2     & 2    & 3    & 2     & 2      \\
$4_2^s$  & 1    & 2     & 2      & 2     & 1    & 3     & 2    & 2     & 2      & 2     & 2    & 3    & 2     & 2      \\
$4_3$   & 2    & 1     & 3      & 3     & 3    & 2     & 4    & 3     & 3      & 3     & 3    & 4    & 2     & 3      \\
$4_3^m$  & 2    & 3     & 1      & 3     & 3    & 4     & 2    & 3     & 3      & 3     & 3    & 4    & 3     & 2      \\
$4_3^{ms}$ & 2    & 3     & 3      & 1     & 3    & 2     & 4    & 3     & 3      & 3     & 3    & 4    & 3     & 3      \\
$4_3^s$  & 2    & 3     & 3      & 3     & 1    & 4     & 2    & 3     & 3      & 3     & 3    & 4    & 3     & 3      \\
$4_4$   & 2    & 2     & 3      & 1     & 3    & 1     & 4    & 3     & 3      & 3     & 3    & 4    & 3     & 3      \\
$4_4^m$  & 2    & 3     & 2      & 3     & 1    & 4     & 1    & 3     & 3      & 3     & 3    & 4    & 3     & 3      \\
$4_4^{ms}$ & 2    & 1     & 3      & 2     & 3    & 1     & 4    & 3     & 3      & 3     & 3    & 4    & 2     & 3      \\
$4_4^s$  & 2    & 3     & 1      & 3     & 2    & 4     & 1    & 3     & 3      & 3     & 3    & 4    & 3     & 2      \\
$4_5$   & 2    & 3     & 1      & 3     & 3    & 4     & 2    & 1     & 3      & 3     & 2    & 2    & 3     & 2      \\
$4_5^m$  & 2    & 1     & 3      & 3     & 3    & 2     & 4    & 3     & 1      & 2     & 3    & 2    & 2     & 3      \\
$4_5^{ms}$ & 2    & 3     & 3      & 3     & 1    & 4     & 2    & 3     & 2      & 1     & 3    & 2    & 3     & 3      \\
$4_5^s$  & 2    & 3     & 3      & 1     & 3    & 2     & 4    & 2     & 3      & 3     & 1    & 2    & 3     & 3      \\
$4_6$   & 3    & 2     & 4      & 2     & 4    & 1     & 5    & 4     & 4      & 4     & 4    & 5    & 3     & 4      \\
$4_6^m$  & 3    & 4     & 2      & 4     & 2    & 5     & 1    & 4     & 4      & 4     & 4    & 5    & 4     & 3      \\
$4_6^{ms}$ & 3    & 2     & 4      & 2     & 4    & 1     & 5    & 4     & 4      & 4     & 4    & 5    & 3     & 4      \\
$4_6^s$  & 3    & 4     & 2      & 4     & 2    & 5     & 1    & 4     & 4      & 4     & 4    & 5    & 4     & 3      \\
$4_7$   & 1    & 2     & 1      & 2     & 2    & 3     & 2    & 2     & 2      & 2     & 2    & 3    & 2     & 2      \\
$4_7^{m}$  & 1    & 1     & 2      & 2     & 2    & 2     & 3    & 2     & 2      & 2     & 2    & 3    & 2     & 2      \\
$4_7^{ms}$ & 1    & 2     & 2      & 2     & 1    & 3     & 2    & 2     & 2      & 2     & 2    & 3    & 2     & 2      \\
$4_7^s$  & 1    & 2     & 2      & 1     & 2    & 2     & 3    & 2     & 2      & 2     & 2    & 3    & 2     & 2      \\
$4_8$   & 1    & 1     & 2      & 2     & 2    & 2     & 3    & 2     & 2      & 2     & 2    & 3    & 2     & 2      \\
$4_8^m$  & 1    & 2     & 1      & 2     & 2    & 3     & 2    & 2     & 2      & 2     & 2    & 3    & 2     & 2      \\
$4_8^{ms}$ & 1    & 2     & 2      & 1     & 2    & 2     & 3    & 2     & 2      & 2     & 2    & 3    & 2     & 2      \\
$4_8^s$  & 1    & 2     & 2      & 2     & 1    & 3     & 2    & 2     & 2      & 2     & 2    & 3    & 2     & 2      \\ \bottomrule
\end{tabular}
\vspace{.2cm}
\caption{Table of experimental $f$-distances of  all knotoids with up to 4 crossings (part 1).}\label{tab:cr4all1}
\end{table}

\begin{table}[H]
\centering
\begin{tabular}{@{}lllllllllllllll@{}}
\toprule
& $4_2^{ms}$   & $4_2^s$ & $4_3$ & $4_3^m$ & $4_3^{ms}$ & $4_3^s$ & $4_4$ & $4_4^m$ & $4_4^{ms}$ & $4_4^s$ & $4_5$ & $4_5^m$ & $4_5^{ms}$ & $4_5^s$\\ \midrule
$0_1$   & 1     & 1    & 2     & 2      & 2     & 2    & 2     & 2      & 2     & 2    & 2     & 2      & 2     & 2    \\
$2_1$   & 2     & 2    & 1     & 3      & 3     & 3    & 2     & 3      & 1     & 3    & 3     & 1      & 3     & 3    \\
$2_1^m$  & 2     & 2    & 3     & 1      & 3     & 3    & 3     & 2      & 3     & 1    & 1     & 3      & 3     & 3    \\
$2_1^{ms}$ & 1     & 2    & 3     & 3      & 1     & 3    & 1     & 3      & 2     & 3    & 3     & 3      & 3     & 1    \\
$2_1^s$  & 2     & 1    & 3     & 3      & 3     & 1    & 3     & 1      & 3     & 2    & 3     & 3      & 1     & 3    \\
$3_1$   & 2     & 3    & 2     & 4      & 2     & 4    & 1     & 4      & 1     & 4    & 4     & 2      & 4     & 2    \\
$3_1^m$  & 3     & 2    & 4     & 2      & 4     & 2    & 4     & 1      & 4     & 1    & 2     & 4      & 2     & 4    \\
$3_2$   & 2     & 2    & 3     & 3      & 3     & 3    & 3     & 3      & 3     & 3    & 1     & 3      & 3     & 2    \\
$3_2^m$  & 2     & 2    & 3     & 3      & 3     & 3    & 3     & 3      & 3     & 3    & 3     & 1      & 2     & 3    \\
$3_2^{ms}$ & 2     & 2    & 3     & 3      & 3     & 3    & 3     & 3      & 3     & 3    & 3     & 2      & 1     & 3    \\
$3_2^{s}$  & 2     & 2    & 3     & 3      & 3     & 3    & 3     & 3      & 3     & 3    & 2     & 3      & 3     & 1    \\
$4_1$   & 3     & 3    & 4     & 4      & 4     & 4    & 4     & 4      & 4     & 4    & 2     & 2      & 2     & 2    \\
$4_2$   & 2     & 2    & 2     & 3      & 3     & 3    & 3     & 3      & 2     & 3    & 3     & 2      & 3     & 3    \\
$4_2^m$  & 2     & 2    & 3     & 2      & 3     & 3    & 3     & 3      & 3     & 2    & 2     & 3      & 3     & 3    \\
$4_2^{ms}$ & 0     & 2    & 3     & 3      & 2     & 3    & 2     & 3      & 3     & 3    & 3     & 3      & 3     & 2    \\
$4_2^s$  & 2     & 0    & 3     & 3      & 3     & 2    & 3     & 2      & 3     & 3    & 3     & 3      & 2     & 3    \\
$4_3$   & 3     & 3    & 0     & 4      & 4     & 4    & 3     & 4      & 2     & 4    & 4     & 2      & 4     & 4    \\
$4_3^m$  & 3     & 3    & 4     & 0      & 4     & 4    & 4     & 3      & 4     & 2    & 2     & 4      & 4     & 4    \\
$4_3^{ms}$ & 2     & 3    & 4     & 4      & 0     & 4    & 2     & 4      & 3     & 4    & 4     & 4      & 4     & 2    \\
$4_3^s$  & 3     & 2    & 4     & 4      & 4     & 0    & 4     & 2      & 4     & 3    & 4     & 4      & 2     & 4    \\
$4_4$   & 2     & 3    & 3     & 4      & 2     & 4    & 0     & 4      & 2     & 4    & 4     & 3      & 4     & 2    \\
$4_4^m$  & 3     & 2    & 4     & 3      & 4     & 2    & 4     & 0      & 4     & 2    & 3     & 4      & 2     & 4    \\
$4_4^{ms}$ & 3     & 3    & 2     & 4      & 3     & 4    & 2     & 4      & 0     & 4    & 4     & 2      & 4     & 3    \\
$4_4^s$  & 3     & 3    & 4     & 2      & 4     & 3    & 4     & 2      & 4     & 0    & 2     & 4      & 3     & 4    \\
$4_5$   & 3     & 3    & 4     & 2      & 4     & 4    & 4     & 3      & 4     & 2    & 0     & 4      & 4     & 3    \\
$4_5^m$  & 3     & 3    & 2     & 4      & 4     & 4    & 3     & 4      & 2     & 4    & 4     & 0      & 3     & 4    \\
$4_5^{ms}$ & 3     & 2    & 4     & 4      & 4     & 2    & 4     & 2      & 4     & 3    & 4     & 3      & 0     & 4    \\
$4_5^s$  & 2     & 3    & 4     & 4      & 2     & 4    & 2     & 4      & 3     & 4    & 3     & 4      & 4     & 0    \\
$4_6$   & 3     & 4    & 3     & 5      & 3     & 5    & 2     & 5      & 2     & 5    & 5     & 3      & 5     & 3    \\
$4_6^m$  & 4     & 3    & 5     & 3      & 5     & 3    & 5     & 2      & 5     & 2    & 3     & 5      & 3     & 5    \\
$4_6^{ms}$ & 3     & 4    & 3     & 5      & 3     & 5    & 2     & 5      & 2     & 5    & 5     & 3      & 5     & 3    \\
$4_6^s$  & 4     & 3    & 5     & 3      & 5     & 3    & 5     & 2      & 5     & 2    & 3     & 5      & 3     & 5    \\
$4_7$   & 2     & 2    & 3     & 2      & 3     & 3    & 3     & 3      & 3     & 2    & 2     & 3      & 3     & 3    \\
$4_7^m$  & 2     & 2    & 2     & 3      & 3     & 3    & 3     & 3      & 2     & 3    & 3     & 2      & 3     & 3    \\
$4_7^{ms}$ & 2     & 2    & 3     & 3      & 3     & 2    & 3     & 2      & 3     & 3    & 3     & 3      & 2     & 3    \\
$4_7^s$  & 2     & 2    & 3     & 3      & 2     & 3    & 2     & 3      & 3     & 3    & 3     & 3      & 3     & 2    \\
$4_8$   & 2     & 2    & 2     & 3      & 3     & 3    & 3     & 3      & 2     & 3    & 3     & 2      & 3     & 3    \\
$4_8^m$  & 2     & 2    & 3     & 2      & 3     & 3    & 3     & 3      & 3     & 2    & 2     & 3      & 3     & 3    \\
$4_8^{ms}$ & 2     & 2    & 3     & 3      & 2     & 3    & 2     & 3      & 3     & 3    & 3     & 3      & 3     & 2    \\
$4_8^s$  & 2     & 2    & 3     & 3      & 3     & 2    & 3     & 2      & 3     & 3    & 3     & 3      & 2     & 3    \\ \bottomrule
\end{tabular}
\vspace{.2cm}
\caption{Table of experimental $f$-distances of  all knotoids with up to 4 crossings (part 2).}\label{tab:cr4all2}
\end{table}


\begin{table}[!tbhp]
\centering
\begin{tabular}{@{}lllllllllllll@{}}
\toprule
& $4_6$   & $4_6^{m}$ & $4_6^{ms}$ & $4_6^{s}$ & $4_7$ & $4_7^{m}$ & $4_7^{ms}$ & $4_7^{s}$ & $4_8$ & $4_8^{m}$ & $4_8^{ms}$ & $4_8^{s}$  \\ \midrule
$0_1$   & 3     & 3      & 3     & 3    & 1     & 1      & 1     & 1    & 1     & 1      & 1     & 1 \\
$2_1$   & 2     & 4      & 2     & 4    & 2     & 1      & 2     & 2    & 1     & 2      & 2     & 2 \\
$2_1^{m}$  & 4     & 2      & 4     & 2    & 1     & 2      & 2     & 2    & 2     & 1      & 2     & 2 \\
$2_1^{ms}$ & 2     & 4      & 2     & 4    & 2     & 2      & 2     & 1    & 2     & 2      & 1     & 2 \\
$2_1^{s}$  & 4     & 2      & 4     & 2    & 2     & 2      & 1     & 2    & 2     & 2      & 2     & 1 \\
$3_1$   & 1     & 5      & 1     & 5    & 3     & 2      & 3     & 2    & 2     & 3      & 2     & 3 \\
$3_1^{m}$  & 5     & 1      & 5     & 1    & 2     & 3      & 2     & 3    & 3     & 2      & 3     & 2 \\
$3_2$   & 4     & 4      & 4     & 4    & 2     & 2      & 2     & 2    & 2     & 2      & 2     & 2 \\
$3_2^{m}$  & 4     & 4      & 4     & 4    & 2     & 2      & 2     & 2    & 2     & 2      & 2     & 2 \\
$3_2^{ms}$ & 4     & 4      & 4     & 4    & 2     & 2      & 2     & 2    & 2     & 2      & 2     & 2 \\
$3_2^{s}$  & 4     & 4      & 4     & 4    & 2     & 2      & 2     & 2    & 2     & 2      & 2     & 2 \\
$4_1$   & 5     & 5      & 5     & 5    & 3     & 3      & 3     & 3    & 3     & 3      & 3     & 3 \\
$4_2$   & 3     & 4      & 3     & 4    & 2     & 2      & 2     & 2    & 2     & 2      & 2     & 2 \\
$4_2^{m}$  & 4     & 3      & 4     & 3    & 2     & 2      & 2     & 2    & 2     & 2      & 2     & 2 \\
$4_2^{ms}$ & 3     & 4      & 3     & 4    & 2     & 2      & 2     & 2    & 2     & 2      & 2     & 2 \\
$4_2^{s}$  & 4     & 3      & 4     & 3    & 2     & 2      & 2     & 2    & 2     & 2      & 2     & 2 \\
$4_3$   & 3     & 5      & 3     & 5    & 3     & 2      & 3     & 3    & 2     & 3      & 3     & 3 \\
$4_3^{m}$  & 5     & 3      & 5     & 3    & 2     & 3      & 3     & 3    & 3     & 2      & 3     & 3 \\
$4_3^{ms}$ & 3     & 5      & 3     & 5    & 3     & 3      & 3     & 2    & 3     & 3      & 2     & 3 \\
$4_3^{s}$  & 5     & 3      & 5     & 3    & 3     & 3      & 2     & 3    & 3     & 3      & 3     & 2 \\
$4_4$   & 2     & 5      & 2     & 5    & 3     & 3      & 3     & 2    & 3     & 3      & 2     & 3 \\
$4_4^{m}$  & 5     & 2      & 5     & 2    & 3     & 3      & 2     & 3    & 3     & 3      & 3     & 2 \\
$4_4^{ms}$ & 2     & 5      & 2     & 5    & 3     & 2      & 3     & 3    & 2     & 3      & 3     & 3 \\
$4_4^{s}$  & 5     & 2      & 5     & 2    & 2     & 3      & 3     & 3    & 3     & 2      & 3     & 3 \\
$4_5$   & 5     & 3      & 5     & 3    & 2     & 3      & 3     & 3    & 3     & 2      & 3     & 3 \\
$4_5^{m}$  & 3     & 5      & 3     & 5    & 3     & 2      & 3     & 3    & 2     & 3      & 3     & 3 \\
$4_5^{ms}$ & 5     & 3      & 5     & 3    & 3     & 3      & 2     & 3    & 3     & 3      & 3     & 2 \\
$4_5^{s}$  & 3     & 5      & 3     & 5    & 3     & 3      & 3     & 2    & 3     & 3      & 2     & 3 \\
$4_6$   & 0     & 6      & 2     & 6    & 4     & 3      & 4     & 3    & 3     & 4      & 3     & 4 \\
$4_6^{m}$  & 6     & 0      & 6     & 2    & 3     & 4      & 3     & 4    & 4     & 3      & 4     & 3 \\
$4_6^{ms}$ & 2     & 6      & 0     & 6    & 4     & 3      & 4     & 3    & 3     & 4      & 3     & 4 \\
$4_6^{s}$  & 6     & 2      & 6     & 0    & 3     & 4      & 3     & 4    & 4     & 3      & 4     & 3 \\
$4_7$   & 4     & 3      & 4     & 3    & 0     & 2      & 2     & 2    & 2     & 2      & 2     & 2 \\
$4_7^{m}$  & 3     & 4      & 3     & 4    & 2     & 0      & 2     & 2    & 2     & 2      & 2     & 2 \\
$4_7^{ms}$ & 4     & 3      & 4     & 3    & 2     & 2      & 0     & 2    & 2     & 2      & 2     & 2 \\
$4_7^{s}$  & 3     & 4      & 3     & 4    & 2     & 2      & 2     & 0    & 2     & 2      & 2     & 2 \\
$4_8$   & 3     & 4      & 3     & 4    & 2     & 2      & 2     & 2    & 0     & 2      & 2     & 2 \\
$4_8^{m}$  & 4     & 3      & 4     & 3    & 2     & 2      & 2     & 2    & 2     & 0      & 2     & 2 \\
$4_8^{ms}$ & 3     & 4      & 3     & 4    & 2     & 2      & 2     & 2    & 2     & 2      & 0     & 2 \\
$4_8^{s}$  & 4     & 3      & 4     & 3    & 2     & 2      & 2     & 2    & 2     & 2      & 2     & 0 \\ \bottomrule
\end{tabular}
\vspace{.2cm}
\caption{Table of experimental $f$-distances of  all knotoids with up to 4 crossings (part 3).}\label{tab:cr4all3}
\end{table}

\newpage

\normalsize

\begin{rmk}\label{rem:pca}
By employing statistical procedures we can re-arrange the table of numerical $f$-distances so that the isotopy classes of knotoids are ordered with respect to their proximity. More precisely, by considering Tables \ref{tab:cr4all1}-\ref{tab:cr4all3} as a $40\times40$ matrix $M$ and then shifting its empirical mean to zero, we can apply Principal Component Analysis (PCA) to move the data to a new orthogonal coordinate system where the greatest variance of the data appears by projecting along the first coordinate. This corresponds to the eigenvector that is related to the highest eigenvalue of the correlation matrix $\frac{1}{n-1}M^T M$, where $n$ is the number of rows of $M$. The post-PCA matrix provides a more comprehensive overview of the knotoids space and allows for an easier exploration of potential relations between knotoids. A graphical representation of the results of applying PCA on the set of knotoids is shown below in Figure \ref{fig:graph_pca}. The analysis and the graphic representations below were done using the statistical package \texttt{R}.

\begin{figure}[!tbhp]

 \includegraphics[scale=.7]{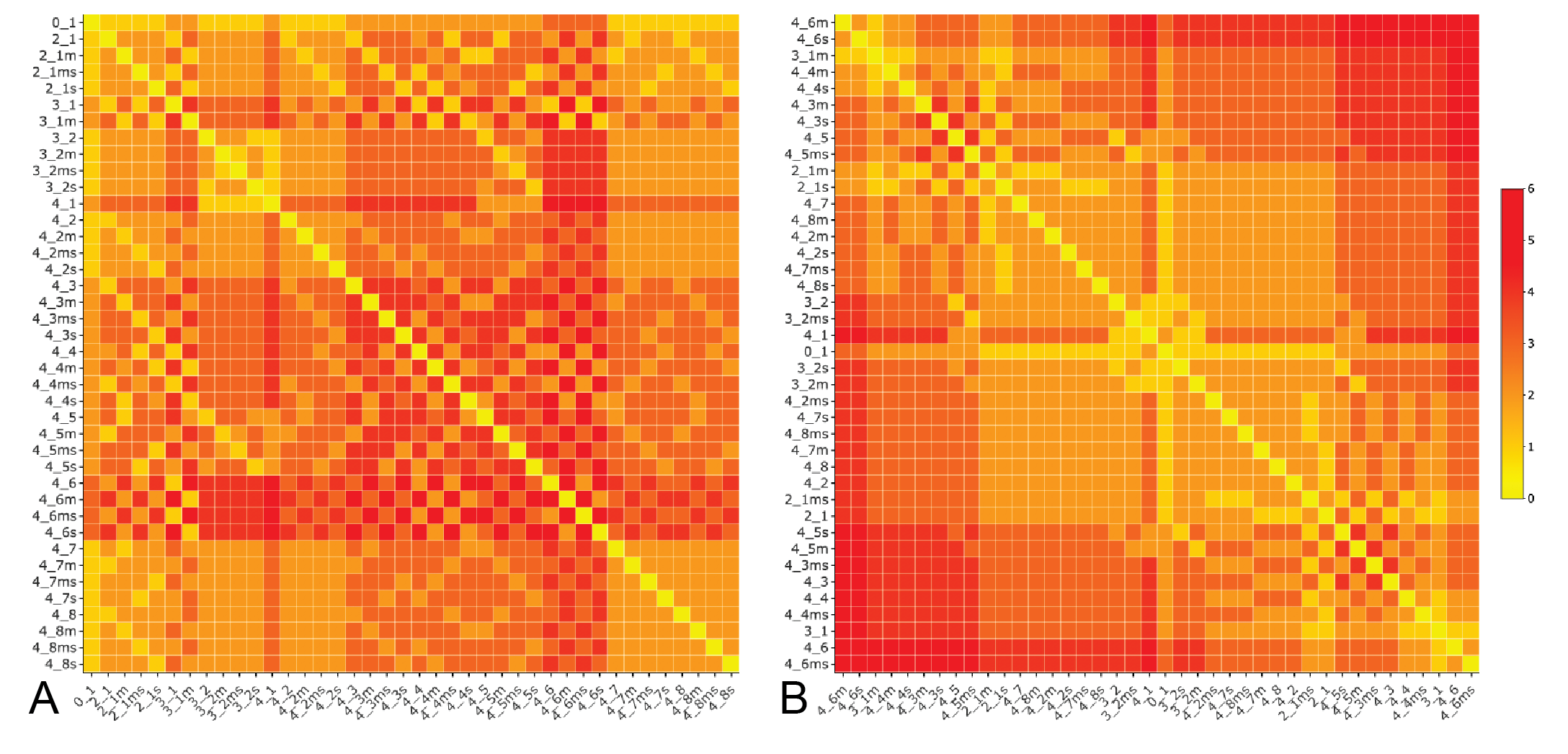}
 \caption{\textbf{Graphical representation of experimental values.} A graphical representation of  the set of all knotoids with up to 4 crossings (on the left) before and after (on the right) using PCA. On the right, the trivial knotoid is placed in the middle of the figure since while each knotoid together with its rotation (here denoted as \texttt{ms}) are always on the opposite side of $0_1$ and in equal distance from it  than its mirror reflection and its symmetric involutions. Note also that the same holds for $4_1$ due to its amphichirality and since two knotoids cannot occupy the same spot, it is place immediately to the left of $0_1$. The legend on the far right shows the correspondence between distance and colour.} 
 \label{fig:graph_pca}
 \end{figure}
  \end{rmk}

\end{document}